\def\NZQ{\mathbb}               
\def\ZZ{{\NZQ Z}}
\def\RR{{\NZQ R}}
\def\CC{{\NZQ C}}
\def\frk{\mathfrak}               
\def\Phi{{\frk N}}
\def\ab{{\bold a}}
\def\bb{{\bold b}}
\def\eb{{\bold e}}
\def\vb{{\bold v}}
\def\xb{{\bold x}}
\def\yb{{\bold y}}
\def\opn#1#2{\def#1{\operatorname{#2}}} 
\opn\chara{char} 
\opn\length{\ell} 
\opn\pd{pd} 
\opn\rk{rk}
\opn\projdim{proj\,dim} 
\opn\injdim{inj\,dim} 
\opn\rank{rank}
\opn\depth{depth} 
\opn\grade{grade} 
\opn\height{height}
\opn\embdim{emb\,dim} 
\opn\codim{codim}
\opn\Tr{Tr} 
\opn\bigrank{big\,rank}
\opn\superheight{superheight}
\opn\lcm{lcm}
\opn\trdeg{tr\,deg}
\opn\reg{reg} 
\opn\lreg{lreg} 
\opn\ini{in} 
\opn\lpd{lpd}
\opn\size{size}
\opn\mult{mult}
\opn\dist{dist}
\opn\cone{cone}
\opn\lex{lex}
\opn\rev{rev}
\opn\div{div} \opn\Div{Div} \opn\cl{cl} \opn\Cl{Cl}
\opn\Spec{Spec} \opn\Supp{Supp} \opn\supp{supp} \opn\Sing{Sing}
\opn\Ass{Ass} \opn\Min{Min}
\opn\Ann{Ann} \opn\Rad{Rad} \opn\Soc{Soc}
\opn\Syz{Syz} \opn\Im{Im} \opn\Ker{Ker} \opn\Coker{Coker}
\opn\Am{Am} \opn\Hom{Hom} \opn\Tor{Tor} \opn\Ext{Ext}
\opn\End{End} \opn\Aut{Aut} \opn\id{id} \opn\ini{in}
\opn\nat{nat}
\opn\pff{pf}
\opn\Pf{Pf} \opn\GL{GL} \opn\SL{SL} \opn\mod{mod} \opn\ord{ord}
\opn\Gin{Gin}
\opn\Hilb{Hilb}\opn\adeg{adeg}\opn\std{std}\opn\ip{infpt}
\opn\Pol{Pol}
\opn\sat{sat}
\opn\Var{Var}
\opn\Gen{Gen}
\opn\aff{aff} \opn\con{conv} \opn\relint{relint} \opn\st{st}
\opn\lk{lk} \opn\cn{cn} \opn\core{core} \opn\vol{vol}
\opn\link{link} \opn\star{star}
\opn\gr{gr}
\def\Pc{{\mathcal P}}
\def\vol{{\textnormal{vol}}}
\def\hei{{\textnormal{ht}}}
\def\ord{{\textnormal{ord}}}
\def\pot#1#2{#1[\kern-0.28ex[#2]\kern-0.28ex]}
\opn\dirlim{\underrightarrow{\lim}}
\opn\inivlim{\underleftarrow{\lim}}
\def\Implies{\ifmmode\Longrightarrow \else
	\unskip${}\Longrightarrow{}$\ignorespaces\fi}
\def\implies{\ifmmode\Rightarrow \else
	\unskip${}\Rightarrow{}$\ignorespaces\fi}
\def\iff{\ifmmode\Longleftrightarrow \else
	\unskip${}\Longleftrightarrow{}$\ignorespaces\fi}
\newtheorem{Theorem}{Theorem}[section]
\newtheorem{Lemma}[Theorem]{Lemma}
\newtheorem{Corollary}[Theorem]{Corollary}
\newtheorem{Proposition}[Theorem]{Proposition}
\newtheorem{Remark}[Theorem]{Remark}
\newtheorem{Example}[Theorem]{Example}
\newtheorem{Problem}[Theorem]{Problem}
\newtheorem{Conjecture}[Theorem]{Conjecture}
\let\epsilon\varepsilon
\let\phi=\varphi
\let\kappa=\varkappa
\def\qed{\ifhmode\textqed\fi
	\ifmmode\ifinner\quad\qedsymbol\else\dispqed\fi\fi}
\def\textqed{\unskip\nobreak\penalty50
	\hskip2em\hbox{}\nobreak\hfil\qedsymbol
	\parfillskip=0pt \finalhyphendemerits=0}
\def\dispqed{\rlap{\qquad\qedsymbol}}
\opn\dis{dis}
\opn\height{height}
\opn\dist{dist}
\def\pnt{{\raise0.5mm\hbox{\large\bf.}}}
\opn\Lex{Lex}
\opn\conv{conv}
\begin{document}
\title{Gorenstein simplices with a given $\delta$-polynomial}
\author[T.~Hibi]{Takayuki Hibi}
\address[Takayuki Hibi]{Department of Pure and Applied Mathematics,
	Graduate School of Information Science and Technology,
	Osaka University,
	Suita, Osaka 565-0871, Japan}
\email{hibi@math.sci.osaka-u.ac.jp}

\author[A.~Tsuchiya]{Akiyoshi Tsuchiya}
\address[Akiyoshi Tsuchiya]{Akiyoshi Tsuchiya,
Graduate school of Mathematical Sciences,
University of Tokyo,
Komaba, Meguro-ku, Tokyo 153-8914, Japan} 
\email{akiyoshi@ms.u-tokyo.ac.jp}

\author[K.~Yoshida]{Koutarou Yoshida}
\address[Koutarou Yoshida]{Department of Pure and Applied Mathematics,
	Graduate School of Information Science and Technology,
	Osaka University,
	Suita, Osaka 565-0871, Japan}
\email{kt-yoshida@ist.osaka-u.ac.jp}
\subjclass[2010]{52B12, 52B20}
\keywords{Lattice polytope, Gorenstein polytope, $\delta$-polynomial, empty simplex}
\begin{abstract}
To classify the lattice polytopes 
with a given $\delta$-polynomial is an important open problem in Ehrhart theory. 
A complete classification of the Gorenstein simplices whose normalized volumes are prime integers is known.
In particular, their   
$\delta$-polynomials are 
of the form $1+t^k+\cdots+t^{(v-1)k}$, 
where $k$ and $v$ are positive integers.
In the present paper, a complete classification of the Gorenstein simplices with the above $\delta$-polynomials will be performed,
when $v$ is either $p^2$ or $pq$, where $p$ and $q$ are prime integers 
with $p \neq q$. Moreover, we consider  the number of Gorenstein simplices, up to unimodular equivalence, 
with the expected $\delta$-polynomial.
\end{abstract} 

\maketitle
\section*{Introduction}
To classify the lattice polytopes 
with a given $\delta$-polynomial is an important open problem among the study on lattice polytopes.
A {\em lattice polytope} is a convex polytope
$\Pc \subset \RR^d$ all of whose vertices have integer coordinates.
Recall from \cite{CCD} and \cite[Part II]{HibiRedBook}
what the {\em $\delta$-polynomial} of $\Pc$ is. 

Let $\Pc \subset \RR^d$ be a lattice polytope of dimension $d$
and define $\delta(\Pc, t)$ by the formula
\[
(1 - t)^{d+1} \delta(\Pc, t)
= 1 + \sum_{n=1}^{\infty} |n\Pc \cap \ZZ^d| t^n,
\]
where $n\Pc=\{n \ab : \ab \in \Pc \}$, the $n$th dilated 
polytopes of $\Pc$.
It follows that $\delta(\Pc, t)$ is a polynomial in $t$ 
of degree at most $d$.
We say that $\delta(\Pc, t)$
is the {\em $\delta$-polynomial} or {\em $h^*$-polynomial}  of $\Pc$.
Let $\delta(\Pc, t) = \delta_0+\delta_1 t+\cdots+\delta_d t^d$.
Then $\delta_0 = 1$, $\delta_1= |\Pc \cap \ZZ^d|-(d+1)$ and $\delta_d = |(\Pc \setminus \partial \Pc) 
\cap \ZZ^d|$, where $\partial \Pc$ is the boundary of $\Pc$,
and each $\delta_i \geq 0$.
When $\delta_d \neq 0$, one has $\delta_i \geq \delta_1$
for $1 \leq i \leq d$.
Moreover, $\delta(\Pc,1) = \sum_{i=0}^{d}\delta_i$ 
coincides with the {\em normalized volume} 
$\text{Vol}(\Pc)$ of $\Pc$.  

A lattice polytope $\Pc \subset \RR^d$ of dimension $d$
is called {\em reflexive} if the origin of $\RR^d$ belongs to 
the interior of $\Pc$ and the dual polytope
(\cite[pp.~103--104]{HibiRedBook}) of $\Pc$ is again 
a lattice polytope.  A lattice polytope $\Pc \subset \RR^d$ 
of dimension $d$ is called {\em Gorenstein of index} $r$ if $r\Pc$ is unimodularly equivalent to a reflexive polytope. 
It is known that 
$\Pc \subset \RR^d$ is Gorenstein if and only if the $\delta$-polynomial
$\delta(\Pc,t)=\delta_0+\delta_1 t+\cdots+\delta_s t^s$,
where $\delta_s \neq 0$ is palindromic, i.e.,
$\delta_i=\delta_{s-i}$ for each $0 \leq i \leq \lfloor s/2\rfloor$.

Gorenstein polytopes are of interest in commutative algebra, 
mirror symmetry and tropical geometry (\cite{Batyrev,JK}).
In each dimension, there exist only finitely many Gorenstein polytopes 
up to unimodular equivalence (\cite{Lag})
and, in addition, Gorenstein polytopes are completely classified
up to dimension $4$ (\cite{Kre}).
Recently certain classification results of higher-dimensional 
Gorenstein polytopes are obtained by \cite{BJ,HNT,Tsuchiya}. 

The final goal of one of our research projects is to classify 
the Gorenstein simplices with given $\delta$-polynomials.
Classification problems of lattice simplices have been studied by many authors and there are several related works (e.g., see \cite{BDS,HNO}).
In \cite[Corollary 2.4]{Tsuchiya} it is shown that if
$\Delta$ is a Gorenstein simplex whose normalized volume 
$\text{Vol}(\Delta)$ is a prime integer $p$, then its 
$\delta$-polynomial is of the form 
\[
\delta(\Delta,t)=1+t^{k}+\cdots+t^{(p-1)k},
\]
where $k$ is a positive integer (Proposition \ref{moti}).
In the present paper, from the fact we focus on the following problem:
\begin{Problem}
\label{pro}
	Given positive integers $k$ and $v$,
	classify the Gorenstein simplices with the $\delta$-polynomial 
	$1+t^{k}+\cdots+t^{(v-1)k}$.
\end{Problem}

A lattice simplex is called {\em empty} 
if it possesses no lattice point except for its vertices. 
A lattice simplex $\Delta$ 
with $\delta(\Delta,t)=\delta_0+\delta_1 t+\cdots+\delta_d t^d$
is empty if and only if $\delta_1 = 0$.
In particular, in Problem \ref{pro}, when $k > 1$, 
its target is Gorenstein empty simplices.

The present paper is organized as follows:
Section $1$ consists of the review of fundamental materials 
on lattice simplices and the collection of indispensable lemmata.
We devote Section $2$ to discuss a lower bound on the dimensions 
of Gorenstein simplices with a given $\delta$-polynomial 
of Problem \ref{pro} and, in addition, to classify 
the Gorenstein simplices when the lower bound holds (Proposition \ref{inf}).
The highlight of this paper is Section $3$, where
a complete answer of Problem \ref{pro} will be given when $v$ is 
either $p^2$ or $pq$, where $p$ and $q$ are distinct prime integers (Theorems \ref{main1} and \ref{main2}).  
Finally, in Section $4$,
we will discuss 
the number of Gorenstein simplices, up to unimodular equivalence, 
with a given $\delta$-polynomial of Problem \ref{pro}.

\section*{Acknowledgment}
The authors would like to thank anonymous referees for reading the manuscript carefully.
The second author was partially supported by Grant-in-Aid for JSPS Fellows 16J01549.

\section{Preliminaries}
In this section, we recall basic materials on lattice simplices and we prepare the essential lemmata in this paper.

At first, we introduce the associated finite abelian groups of lattice simplices.
For a lattice simplex $\Delta \subset \RR^d$ of dimension $d$ whose vertices are $\vb_0,\ldots,\vb_d \in \ZZ^d$
	set 
	$$\Lambda_\Delta=\{(\lambda_0,\ldots,\lambda_d) \in (\RR/\ZZ)^{d+1} : \sum\limits_{i=0}^{d}\lambda_i(\vb_i,1) \in \ZZ^{d+1}   \}.$$
	The collection $\Lambda_\Delta$ forms a finite abelian group with addition defined as follows: 
	For $(\lambda_0,\ldots,\lambda_d) \in (\RR/\ZZ)^{d+1}$ and $(\lambda_0',\ldots,\lambda_d') \in (\RR/\ZZ)^{d+1}$,  $(\lambda_0,\ldots,\lambda_d)+(\lambda_0',\ldots,\lambda_d')=(\lambda_0+\lambda_0',\ldots,\lambda_d+\lambda_d') \in (\RR/\ZZ)^{d+1}$.
We denote the unit of $\Lambda_\Delta$ by $\bold{0}$, and the inverse of $\bold{x}$ by $-\bold{x}$,
and also denote $\underbrace{\bold{x}+\cdots+\bold{x}}_{j}$  by $j\bold{x}$ for an integer $j>0$ and $\bold{x} \in \Lambda_\Delta$.
	For $\xb=(x_0,\ldots,x_{d}) \in \Lambda_{\Delta}$, where each $x_i$ is taken with $0 \leq x_i <1$, we set
	$\textnormal{ht}(\xb)=\sum_{i=0}^{d}x_i \in \ZZ$
	and $\textnormal{ord}(\xb)=\min\{\ell \in \ZZ_{> 0} : \ell\xb={\bf 0} \}$.

It is well known that the $\delta$-polynomial of the lattice simplex $\Delta$ can be computed as follows: 
\begin{Lemma}
	\label{delta}
Let $\Delta$ be a lattice simplex of dimension $d$ whose $\delta$-polynomial  equals $1+\delta_1 t+\cdots+\delta_{d}t^d$.
		Then for each $i$, we have $\delta_i=\sharp\{\lambda \in \Lambda_\Delta : \textnormal{ht}(\lambda)=i\}$.
\end{Lemma}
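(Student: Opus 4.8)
The plan is to compute the $\delta$-polynomial through the standard cone construction over $\Delta$ and to identify the group $\Lambda_\Delta$ with the lattice points of a half-open fundamental parallelepiped. Writing $w_i=(v_i,1)\in\ZZ^{d+1}$ for $0\le i\le d$, these vectors form an $\RR$-basis of $\RR^{d+1}$ since $\Delta$ is $d$-dimensional, and I would work with the cone $C=\{\sum_{i=0}^d\lambda_i w_i:\lambda_i\ge 0\}$ together with the half-open parallelepiped $\Pi=\{\sum_{i=0}^d\lambda_i w_i:0\le\lambda_i<1\}$. The observation linking $C$ to the Ehrhart data is that a lattice point of $C$ whose last coordinate equals $n$ corresponds bijectively, by forgetting the last coordinate, to a lattice point of $n\Delta$; hence $1+\sum_{n\ge 1}|n\Delta\cap\ZZ^d|t^n=\sum_{z\in C\cap\ZZ^{d+1}}t^{\ell(z)}$, where $\ell(z)$ denotes the last coordinate of $z$.

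Next I would set up the bijection $\Lambda_\Delta\to\Pi\cap\ZZ^{d+1}$. Given $\lambda=(\lambda_0,\dots,\lambda_d)\in\Lambda_\Delta$, choose the representatives $\lambda_i\in[0,1)$ and send $\lambda$ to $\sum_i\lambda_i w_i$, which lies in $\Pi$ and is a lattice point precisely because $\sum_i\lambda_i w_i\in\ZZ^{d+1}$ by the defining condition of $\Lambda_\Delta$. This map is a bijection because the $w_i$ are linearly independent, so the coefficients $\lambda_i$ are uniquely recovered from the point, and conversely every lattice point of $\Pi$ arises in this way. Under this correspondence the last coordinate of $\sum_i\lambda_i w_i$ is $\sum_i\lambda_i$, which is exactly $\textnormal{ht}(\lambda)$; in particular $\textnormal{ht}(\lambda)$ is a nonnegative integer, consistent with the definition.

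The heart of the argument is the unique decomposition of lattice points of $C$. Because the $w_i$ form a basis, every $z\in C\cap\ZZ^{d+1}$ can be written uniquely as $z=p+\sum_{i=0}^d c_i w_i$ with $p\in\Pi\cap\ZZ^{d+1}$ and $c_i\in\ZZ_{\ge 0}$: writing $z=\sum_i\mu_i w_i$ with $\mu_i\ge 0$ the unique coordinates, one takes $c_i=\lfloor\mu_i\rfloor$, and the residual $z-\sum_i c_i w_i=\sum_i(\mu_i-\lfloor\mu_i\rfloor)w_i$ is the required lattice point of $\Pi$. Tracking last coordinates gives $\ell(z)=\textnormal{ht}(p)+\sum_i c_i$, so summing over $C\cap\ZZ^{d+1}$ factors as
\[
\sum_{z\in C\cap\ZZ^{d+1}}t^{\ell(z)}
=\Bigl(\sum_{p\in\Pi\cap\ZZ^{d+1}}t^{\textnormal{ht}(p)}\Bigr)\prod_{i=0}^d\Bigl(\sum_{c_i\ge 0}t^{c_i}\Bigr)
=\frac{\sum_{\lambda\in\Lambda_\Delta}t^{\textnormal{ht}(\lambda)}}{(1-t)^{d+1}}.
\]

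Comparing this with the defining identity $(1-t)^{d+1}\delta(\Delta,t)=1+\sum_{n\ge 1}|n\Delta\cap\ZZ^d|t^n$ yields $\delta(\Delta,t)=\sum_{\lambda\in\Lambda_\Delta}t^{\textnormal{ht}(\lambda)}$, and reading off the coefficient of $t^i$ gives the claimed formula $\delta_i=\sharp\{\lambda\in\Lambda_\Delta:\textnormal{ht}(\lambda)=i\}$. The only step demanding genuine care is the uniqueness of the decomposition of lattice points in $C$ — equivalently, that $\Pi$ tiles $C$ under translation by the semigroup generated by the $w_i$ — which is exactly where the linear independence of the lifted vertices is essential; the remaining steps are bookkeeping of the last coordinate.
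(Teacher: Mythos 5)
Your proof is correct and complete: the lifting of the vertices to height one, the bijection between $\Lambda_\Delta$ and the lattice points of the half-open parallelepiped, the unique decomposition of lattice points of the cone, and the resulting factorization of the generating function are all carried out accurately, and this is precisely the standard argument. The paper states Lemma \ref{delta} without proof as a well-known fact, and your argument is exactly the canonical one underlying it, so there is nothing further to compare.
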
 
  
Recall that a matrix $A \in \ZZ^{d\times d}$ is $unimodular$ if det($A$)$=\pm 1$.
For lattice polytopes $\mathcal{P},\mathcal{Q} \subset \RR^d$ of dimension $d$, $\mathcal{P}$ and $\mathcal{Q}$ are called {\em unimodularly equivalent} if there exist a unimodular matrix $U \in \ZZ^{d\times d}$ and an integral vector $\bold{w} \in \ZZ^d$ such that $\mathcal{Q}=f_U(\mathcal{P})+\bold{w}$, where $f_U$ is the linear transformation in $\RR^d$ defined by $U$, i.e., $f_U(\bold{v})=\bold{v}U$ for all $\bold{v} \in \RR^d$.

	In \cite{BH}, it is shown that there is a bijection between unimodular equivalence classes of $d$-dimensional lattice simplices with a 
	chosen ordering of their vertices and finite subgroups of $(\RR/\ZZ)^{d+1}$ such that the sum of all entries of each element is an integer.
	In particular, two lattice simplices $\Delta$ and $\Delta'$ are unimodularly equivalent if and only if there exist  orderings of their vertices such that $\Lambda_\Delta=\Lambda_{\Delta'}$.

For a lattice polytope $\mathcal{P} \subset \RR^d$ of dimension $d$, the {\em lattice pyramid} over $\mathcal{P}$ is defined by $\text{conv}(\mathcal{P}\times \left\{ 0 \right\} ,(0,\ldots,0,1))$ $\subset \RR^{d+1}$. Let $\text{Pyr}(\mathcal{P})$ denote this polytope. We use the term lattice pyramid for a lattice polytope that has been obtained by successively taking lattice pyramids.
A characterization of lattice pyramids in terms of the associated finite abelian groups is known.
		\begin{Lemma}[{\cite[Lemma 12]{Nill}}]
		Let $\Delta \subset \RR^d$ be a lattice simplex of dimension $d$.
		Then $\Delta$ is a lattice pyramid if and only if there is $i \in \{0,\ldots,d\}$ such that $\lambda_i=0$ for all $(\lambda_0,\ldots,\lambda_d) \in \Lambda_\Delta$.
	\end{Lemma}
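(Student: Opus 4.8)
The plan is to translate the coordinatewise condition on $\Lambda_\Delta$ into an integrality statement about a dual vector of the homogenized vertices, and then to recognize that integrality as the defining feature of a lattice pyramid. Throughout I would write $\hat v_i=(v_i,1)\in\ZZ^{d+1}$ and let $V$ be the $(d+1)\times(d+1)$ integer matrix whose columns are $\hat v_0,\dots,\hat v_d$; since $\Delta$ is $d$-dimensional, $V$ is invertible. Lifting elements of $(\RR/\ZZ)^{d+1}$ to $\RR^{d+1}$, the condition $\sum_i\lambda_i\hat v_i\in\ZZ^{d+1}$ reads $V\lambda\in\ZZ^{d+1}$, so that $\Lambda_\Delta=V^{-1}\ZZ^{d+1}/\ZZ^{d+1}$ (note $V\ZZ^{d+1}\subseteq\ZZ^{d+1}$ because the $\hat v_i$ are integral). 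I would first record that $\Lambda_\Delta$ is invariant under unimodular equivalence and under reordering the vertices (the latter only permuting coordinates), so that both the pyramid property and the assertion that some coordinate vanishes identically on $\Lambda_\Delta$ are preserved; this lets me compute in convenient coordinates and fix the relevant index to be $i=d$.

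For the forward implication I would suppose $\Delta$ is a lattice pyramid. After a unimodular equivalence and a reordering I may assume $\Delta=\text{Pyr}(\Qc)$ is in standard form, i.e.\ $v_0,\dots,v_{d-1}$ lie in the hyperplane $\{x_d=0\}$ and the apex is $v_d=(0,\dots,0,1)$. Then the $d$-th coordinate of $\hat v_i$ is $0$ for $i<d$ and $1$ for $i=d$, so the $d$-th coordinate of $\sum_i\lambda_i\hat v_i$ equals $\lambda_d$. Membership in $\ZZ^{d+1}$ forces $\lambda_d\in\ZZ$, that is $\lambda_d=0$ in $\RR/\ZZ$, for every $(\lambda_0,\dots,\lambda_d)\in\Lambda_\Delta$, which is the desired vanishing coordinate.

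For the converse I would assume, after reordering, that $\lambda_d=0$ for all $\lambda\in\Lambda_\Delta$. Writing $\lambda=V^{-1}w$ with $w\in\ZZ^{d+1}$ gives $\lambda_d=\psi\cdot w$, where $\psi$ is the last row of $V^{-1}$; this lies in $\ZZ$ for every integral $w$ exactly when $\psi\in\ZZ^{d+1}$, so the hypothesis is equivalent to integrality of the last row of $V^{-1}$. From $V^{-1}V=I$ this $\psi$ satisfies $\psi\cdot\hat v_d=1$ and $\psi\cdot\hat v_j=0$ for $j<d$. Splitting $\psi=(\phi,c)$ with $\phi\in\ZZ^d$ and $c\in\ZZ$ produces an integer functional $\phi$ with $\phi(v_j)=-c$ for $j<d$ and $\phi(v_d)=1-c$. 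Hence $\phi$ attains the consecutive integers $-c$ and $1-c$, so it is primitive; the facet $\conv(v_0,\dots,v_{d-1})$ lies on $\{\phi=-c\}$ while $v_d$ sits at lattice height one above it. I would then choose a unimodular change of coordinates carrying $\phi$ to the last-coordinate functional, translate the base to $\{x_d=0\}$, and apply the unimodular shear fixing $\{x_d=0\}$ that moves the height-one apex to $(0,\dots,0,1)$, thereby exhibiting $\Delta$ as $\text{Pyr}$ of its base facet.

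The routine parts are the matrix bookkeeping and the final shear. The one step deserving care, and where I expect the only real friction, is the converse's geometric conclusion: having produced an integer functional $\phi$ that is constant on $v_0,\dots,v_{d-1}$ and one greater on $v_d$, I must argue cleanly that this functional data is precisely what makes $\Delta$ a lattice pyramid. The crucial point is the primitivity of $\phi$, guaranteed because it attains consecutive integer values, since this is exactly what permits the unimodular reduction to the normal form; without primitivity one would only obtain a pyramid over a coarser sublattice. Once the normal form is reached, the shear identification with $\text{Pyr}(\Qc)$ is immediate.
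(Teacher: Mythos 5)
The paper does not actually prove this lemma: it is quoted from Nill's paper (\cite[Lemma 12]{Nill}) as a known result, so there is no in-paper argument to compare yours against. Judged on its own, your proof is correct and complete. The identification of $\Lambda_\Delta$ with $V^{-1}\ZZ^{d+1}/\ZZ^{d+1}$, the translation of ``$\lambda_i=0$ for all elements of $\Lambda_\Delta$'' into integrality of the $i$-th row $\psi$ of $V^{-1}$, and the extraction of the functional $\phi$ with $\phi(v_j)=-c$ for $j<d$ and $\phi(v_d)=1-c$ are all sound; you correctly isolate the one point that needs care, namely that $\phi$ is primitive because it takes consecutive integer values on lattice points, which is exactly what licenses the unimodular change of coordinates sending $\phi$ to the last-coordinate functional, after which the integral translation and shear put $\Delta$ in the form $\operatorname{conv}(\Qc\times\{0\},(0,\ldots,0,1))$. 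The forward direction via the standard form of a pyramid, together with the observation that both the pyramid property and the vanishing-coordinate property are invariant under unimodular equivalence and vertex reordering, is likewise fine. This is essentially the same duality argument as Nill's original proof, so nothing further is required.
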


For a lattice polytope $\mathcal{P} \subset \RR^d$ of dimension $d$, one has $\delta(\Pc,t)=\delta(\text{Pyr}(\mathcal{P}),t)$.
Therefore, it is essential that we characterize polytopes which are not lattice pyramids  over any lower-dimensional lattice simplex.

Finally, we give some lemmata. 
These lemmata are characterizations of some Gorenstein simplices in terms of the associated finite abelian groups. 

\begin{Lemma}[{\cite[Theorem 3.2]{Tsuchiya}}]
	\label{p^2}
	Let $p$ be a prime integer and $\Delta \subset \RR^d$ a $d$-dimensional lattice simplex whose normalized volume equals $p^2$.
	Suppose that $\Delta$ is not a lattice pyramid over any lower-dimensional lattice simplex.
	Then $\Delta$ is Gorenstein of index $r$ if and only if one of the followings is satisfied:
	\begin{enumerate}
		\item There exists an integer $s$ with $0 \leq s \leq d-1$ such that $rp^2-1=(d-s)+ps$ and $\Lambda_{\Delta}$ 
is generated by $\left(\underbrace{1/p,\ldots,1/p}_{s},\underbrace{1/p^2,\ldots,1/p^2}_{d-s+1}\right)$ 
		for some ordering of the vertices of $\Delta$, or
		\item  $d=rp-1$ and there exist integers $0 \leq a_0,\ldots,a_{d-2} \leq p-1$
		with $p \mid (a_0+\cdots+a_{d-2}-1)$
		such that  $\Lambda_{\Delta}$ is generated by 
$\left((a_0+1)/p,\ldots,(a_{d-2}+1)/p,0,1/p \right)$ 
and $\left((p-a_0)/p,\ldots,(p-a_{d-2})/p,1/p,0 \right)$
for some ordering of the vertices of $\Delta$.
	\end{enumerate}
\end{Lemma}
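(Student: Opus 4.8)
The plan is to move everything into the language of the finite abelian group $\Lambda_\Delta$ and exploit the dichotomy forced by its order. First I would record that, by Lemma \ref{delta} together with $\delta(\Delta,1)=\Vol(\Delta)$, one has $|\Lambda_\Delta|=\sum_i\delta_i=\Vol(\Delta)=p^2$; hence $\Lambda_\Delta$ is abelian of order $p^2$, so $\Lambda_\Delta\cong\ZZ/p^2\ZZ$ (case A) or $\Lambda_\Delta\cong(\ZZ/p\ZZ)^2$ (case B). I would then reformulate both hypotheses group-theoretically: by the pyramid lemma, $\Delta$ is not a lattice pyramid iff for every coordinate $i$ some $\lambda\in\Lambda_\Delta$ has nonzero $i$-th entry; and since $\Delta$ is Gorenstein iff $\delta(\Delta,t)$ is symmetric, Lemma \ref{delta} converts ``Gorenstein of index $r$'' into the statement that the height multiset $\{\textnormal{ht}(\lambda):\lambda\in\Lambda_\Delta\}$ is symmetric about $s/2$, where $s=\deg\delta(\Delta,t)$ and $r=d+1-s$. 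Two elementary facts are used throughout: writing each $\lambda$ with entries in $[0,1)$, one has $\textnormal{ht}(\lambda)+\textnormal{ht}(-\lambda)=\#\{i:\lambda_i\neq 0\}$; and the elements with all entries nonzero are precisely the ``interior'' ones, the least height among them being the codegree $r=d+1-s$.

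For the easy direction I would verify that each listed subgroup is admissible and behaves as claimed. In case A the cyclic group generated by $\xb=(1/p,\dots,1/p,1/p^2,\dots,1/p^2)$ has order $p^2$, every entry of $\xb$ is nonzero (so $\Delta$ is not a pyramid), and a direct count of $\textnormal{ht}(j\xb)$ shows the height multiset is symmetric; the relation $rp^2-1=(d-s)+ps$ records precisely that $\textnormal{ht}(\xb)=s/p+(d-s+1)/p^2$ equals $r$, i.e.\ that $\xb$ is the interior element of minimal height $r$. In case B the two displayed generators have integral height exactly because $p\mid(a_0+\cdots+a_{d-2}-1)$ (this is the computation $\textnormal{ht}(\xb)=(\sum a_i+d)/p$ together with $d\equiv -1 \pmod p$), their supports cover all coordinates, and their sum is the all-ones vector $\bold{g}=(1/p,\dots,1/p)$ of height $(d+1)/p=r$, from which symmetry of the height distribution is checked directly. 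In both cases reading off $\deg\delta$ returns the index $r$.

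The substance is the converse. In case A, fix a generator $\xb$ of $\Lambda_\Delta\cong\ZZ/p^2\ZZ$. Since $\Delta$ is not a pyramid every entry of $\xb$ is nonzero, so each entry has order $p$ or $p^2$, and I would split the coordinates accordingly. Using the freedom to replace $\xb$ by $u\xb$ for a unit $u\in(\ZZ/p^2\ZZ)^{\times}$ and to permute coordinates (both of which preserve $\Lambda_\Delta$, hence the unimodular class), I can normalise one order-$p^2$ entry to $1/p^2$ and one order-$p$ entry to $1/p$. The crux is to show that symmetry of $\{\textnormal{ht}(j\xb)\}$ forces \emph{every} order-$p^2$ entry to equal $1/p^2$ and every order-$p$ entry to equal $1/p$; this produces the normal form with exactly $s$ entries equal to $1/p$, and the relation $rp^2-1=(d-s)+ps$ again merely re-expresses $\textnormal{ht}(\xb)=r$.

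In case B every entry is a multiple of $1/p$, so coordinate $i$ is recorded by a functional $f_i\in\Hom(\Lambda_\Delta,\ZZ/p\ZZ)$ with $\lambda_i=f_i(\lambda)/p$. Integrality of all heights is equivalent to $\sum_i f_i=0$, not being a pyramid to $f_i\neq 0$ for all $i$, and $\Lambda_\Delta\cong(\ZZ/p\ZZ)^2$ to the $f_i$ spanning the rank-$2$ dual. Since every nonzero entry is at least $1/p$, the minimal height of an interior element is $(d+1)/p$, attained only by the all-ones vector; the Gorenstein symmetry forces this element into $\Lambda_\Delta$ as the unique lowest interior element, whence $r=(d+1)/p$ and $d=rp-1$. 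Acting by $\GL_2(\ZZ/p\ZZ)$ and reordering, I would move two coordinates whose functionals form a basis into the last two positions, producing the generators $\xb,\yb$ with the remaining functionals encoded by $a_0,\dots,a_{d-2}$, and $\sum_i f_i=0$ becomes exactly $p\mid(a_0+\cdots+a_{d-2}-1)$. I expect the main obstacle to be the equality-forcing step in case A: excluding the ``mixed'' cyclic configurations requires controlling the sawtooth sums $\sum_i(j a_i \bmod p^2)$ precisely enough to detect any asymmetry, and this is where the combinatorics is heaviest, whereas case B is governed by the transparent linear algebra of the $f_i$ and the forced all-ones point.
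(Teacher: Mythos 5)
First, a point of reference: the paper contains no proof of this lemma --- it is imported verbatim from \cite[Theorem 3.2]{Tsuchiya} --- so there is nothing internal to compare against, and your attempt has to stand on its own. Its skeleton is the natural one: $|\Lambda_\Delta|=\delta(\Delta,1)=\Vol(\Delta)=p^2$ gives the dichotomy $\Lambda_\Delta\cong\ZZ/p^2\ZZ$ versus $\Lambda_\Delta\cong(\ZZ/p\ZZ)^2$, which matches alternatives (1) and (2); Nill's lemma translates ``not a pyramid'' into ``no coordinate vanishes identically on $\Lambda_\Delta$''; and Gorenstein-ness is symmetry of the height distribution via Lemma \ref{delta}. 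The sufficiency direction and the $\GL_2(\ZZ/p\ZZ)$ bookkeeping you describe for case B are also fine (note only that integrality of heights, your ``$\sum_i f_i=0$'', is automatic from the definition of $\Lambda_\Delta$, since the last coordinate of $\sum_i\lambda_i(v_i,1)$ is $\textnormal{ht}(\lambda)$; it is not an extra condition).

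The difficulty is that the proposal stops exactly where the theorem begins: both statements that carry the mathematical weight are left unproved. (i) In case A, the entire content is the claim that symmetry of the multiset $\{\textnormal{ht}(j\xb)\}_{j}$ forces, after multiplying the generator $\xb$ by a unit, every entry to equal $1/p$ or $1/p^2$. You name this ``the crux'' and explicitly defer it (``this is where the combinatorics is heaviest''); but this claim \emph{is} case (1), so in this branch nothing has been established. (Your warm-up normalization is also not available as stated: a single unit $u$ cannot in general be chosen to send an order-$p^2$ entry to $1/p^2$ and, simultaneously, a prescribed order-$p$ entry to $1/p$.) (ii) In case B, the pivotal claim that Gorenstein-ness forces the all-ones vector $(1/p,\ldots,1/p)$ to lie in $\Lambda_\Delta$ is asserted, not argued; symmetry of the $\delta$-vector is a global counting condition, and extracting from it the membership of one specific group element needs a genuine argument (say, via the unique element $\gamma$ of maximal height $s$, the identity $\textnormal{ht}(\lambda)+\textnormal{ht}(-\lambda)=d+1-\#\{i:\lambda_i=0\}$, and a proof that $\gamma$ has no zero entry, so that $-\gamma$ is an all-nonzero element of height $r$ which is then forced to be the all-ones vector). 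Moreover, the ``elementary fact'' you rely on throughout --- that the codegree $r=d+1-s$ is the least height of an element of $\Lambda_\Delta$ with all entries nonzero --- is false for general simplices: the codegree is $\min_\lambda\bigl(\textnormal{ht}(\lambda)+\#\{i:\lambda_i=0\}\bigr)$, and a non-pyramid need not contain any all-nonzero element at all (e.g.\ $\Lambda$ generated by $(1/2,1/2,1/2,1/2,0,0)$ and $(0,0,1/2,1/2,1/2,1/2)$). In the situation at hand this fact holds only as a consequence of the structure being proved, so invoking it as a standing hypothesis is circular. In short: correct frame and correct normal forms, but the two core implications are missing.
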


	\begin{Lemma}[{\cite[Theorem 3.3]{Tsuchiya}}]
		\label{pq}
	Let $p$ and $q$ be prime integers with $p \neq q$ and 
	$\Delta \subset \RR^d$ a $d$-dimensional lattice simplex whose normalized volume equals $pq$.
	Suppose that $\Delta$ is not a lattice pyramid over any lower-dimensional lattice simplex.
	Then $\Delta$ is Gorenstein of index $r$ if and only if 
	there exist nonnegative integers $s_1,s_2,s_3$ with $s_1+s_2+s_3=d+1$ such that the following conditions are satisfied:
	\begin{enumerate}
		\item $rpq=s_1q+s_2p+s_3$, and
		\item$\Lambda_{\Delta}$ is generated by
		$\left(
		\underbrace{1/p,\ldots,1/p}_{s_1},\underbrace{1/q,\ldots,1/q}_{s_2},\underbrace{1/(pq),\ldots,1/(pq)}_{s_3}\right)$ 	for some ordering of the vertices of $\Delta$. 
	\end{enumerate}
\end{Lemma}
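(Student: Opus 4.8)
The plan is to translate the whole statement into the language of the finite abelian group $\Lambda_\Delta$ and then reduce to a coordinatewise arithmetic computation. Summing the formula of Lemma \ref{delta} over all heights gives $|\Lambda_\Delta| = \delta(\Delta,1) = \textnormal{Vol}(\Delta) = pq$, and since $p \neq q$ are primes the only abelian group of order $pq$ is cyclic; thus $\Lambda_\Delta = \langle \mathbf{g}\rangle$ for a generator $\mathbf{g} = (g_0,\ldots,g_d)$ of order $pq$. As $\Delta$ is not a lattice pyramid, the pyramid criterion (\cite[Lemma~12]{Nill}) shows that no coordinate vanishes identically on $\Lambda_\Delta$; since every element is a multiple of $\mathbf{g}$, this forces each $g_i \neq 0$, so $\textnormal{ord}(g_i) \in \{p,q,pq\}$. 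Sorting the coordinates by this order and letting $s_1,s_2,s_3$ be the respective counts already yields $s_1+s_2+s_3 = d+1$; what remains is to show that the Gorenstein hypothesis pins the generator down to the stated normal form and forces condition (1).

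For the implication $(\Leftarrow)$ I would argue directly. Assume (1) and (2) and write $\mathbf{h}$ for the generator in (2), so that its $i$-th entry is $h_i = 1/m_i$ with $m_i=\textnormal{ord}(g_i)$. Condition (1) says $\textnormal{ht}(\mathbf{h}) = (s_1q+s_2p+s_3)/(pq) = r$. I would then check the involution $\mathbf{x}\mapsto -\mathbf{h}-\mathbf{x}$ on $\Lambda_\Delta=\langle\mathbf{h}\rangle$: in a coordinate of order $m$, writing $x_i=j/m$, the numerators $j$ and $-1-j$ sum to $m-1$ modulo $m$, so the two fractional parts sum to $(m-1)/m$. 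Hence $\textnormal{ht}(\mathbf{x})+\textnormal{ht}(-\mathbf{h}-\mathbf{x}) = \sum_i(1-1/m_i) = (d+1)-r$ for every $\mathbf{x}$. By Lemma \ref{delta} the $\delta$-polynomial is therefore symmetric of degree $s=d+1-r$, so $\Delta$ is Gorenstein of index (equal to codegree) $r$.

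The substance is $(\Rightarrow)$, and here I would invoke the standard description of a Gorenstein simplex through a distinguished group element: $\Delta$ is Gorenstein of index $r$ precisely when there is $\mathbf{c}\in\Lambda_\Delta$, all of whose coordinates are nonzero, with $\textnormal{ht}(\mathbf{c}) = d+1-r$ and $\textnormal{ht}(\mathbf{x})+\textnormal{ht}(\mathbf{c}-\mathbf{x}) = \textnormal{ht}(\mathbf{c})$ for all $\mathbf{x}$ (the affine symmetry coming from the unique interior lattice point of $r\Delta$). Granting this, the proof becomes a carry analysis: rewriting the displayed identity as $\sum_i\big(\{x_i\}+\{c_i-x_i\}-\{c_i\}\big)=0$ with each summand a nonnegative integer, every summand (carry) must vanish for every $\mathbf{x}$. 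Letting $x_i$ run through all of $\tfrac{1}{m_i}\ZZ/\ZZ$ (possible since $\Lambda_\Delta$ is cyclic) forces $\{c_i\}=(m_i-1)/m_i$, i.e. $c_i = -1/m_i$ in each coordinate; after reordering this says exactly $\mathbf{c}=-\mathbf{h}$. Since $\textnormal{ord}(\mathbf{c})=\textnormal{lcm}(m_i)=pq$, we get $\Lambda_\Delta=\langle\mathbf{c}\rangle=\langle\mathbf{h}\rangle$, which is condition (2), while $\textnormal{ht}(\mathbf{c})=\sum_i(1-1/m_i)=d+1-r$ unwinds to $s_1q+s_2p+s_3=rpq$, which is condition (1).

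The main obstacle is the group-theoretic characterization of Gorensteinness used in the previous paragraph, namely that the symmetry of the $\delta$-vector is realized by an honest affine involution $\mathbf{x}\mapsto\mathbf{c}-\mathbf{x}$ with $\mathbf{c}\in\Lambda_\Delta$ of height $d+1-r$ and with all coordinates nonzero. Everything downstream of it is the transparent carry computation above; the care is entirely in producing this element $\mathbf{c}$, equivalently in identifying the class of the interior point of $r\Delta$ and showing that it governs the entire height distribution. I would either cite it from the reflexive/Gorenstein-simplex literature or, to stay self-contained, derive it from $\delta_s=\delta_0=1$: symmetry forces a \emph{unique} element $\mathbf{c}$ of maximal height $s=d+1-r$, and one then verifies that this $\mathbf{c}$ has no zero coordinate and that $\mathbf{x}\mapsto\mathbf{c}-\mathbf{x}$ is height-reversing, which is the only genuinely delicate point.
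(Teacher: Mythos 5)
The paper gives you nothing to be compared against here: Lemma \ref{pq} is imported verbatim from \cite[Theorem 3.3]{Tsuchiya}, and the authors simply cite it without proof. So your proposal has to be judged on its own merits, and on those merits it is essentially correct --- it amounts to a reconstruction of the cited result. Every concrete step checks out: $|\Lambda_\Delta|=\delta(\Delta,1)=\textnormal{Vol}(\Delta)=pq$, and since $p\neq q$ are primes every abelian group of that order is cyclic; Nill's pyramid criterion forces every coordinate of a generator to be nonzero, so each coordinate order $m_i$ lies in $\{p,q,pq\}$; the involution $\mathbf{x}\mapsto -\mathbf{h}-\mathbf{x}$ does give $\textnormal{ht}(\mathbf{x})+\textnormal{ht}(-\mathbf{h}-\mathbf{x})=(d+1)-r$ for all $\mathbf{x}$, whence symmetry of the $\delta$-polynomial with top degree $d+1-r$ and Gorensteinness of index $r$; and in the forward direction the carry analysis correctly pins down $c_i=(m_i-1)/m_i$ for every $i$ (cyclicity is exactly what lets $x_i$ sweep all of $\tfrac{1}{m_i}\ZZ/\ZZ$), so that $\mathbf{h}=-\mathbf{c}$ has order $\textnormal{lcm}(m_0,\ldots,m_d)=pq$, hence generates $\Lambda_\Delta$, and $\textnormal{ht}(\mathbf{h})=r$ unwinds to condition (1).

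The one dependency you leave open --- that Gorensteinness of index $r$ produces $\mathbf{c}\in\Lambda_\Delta$ with $\textnormal{ht}(\mathbf{c})=d+1-r$ and $\textnormal{ht}(\mathbf{x})+\textnormal{ht}(\mathbf{c}-\mathbf{x})=\textnormal{ht}(\mathbf{c})$ for all $\mathbf{x}$ --- is real but closes in a few lines from the symmetric-$\delta$ criterion already quoted in the paper's introduction, so it does not threaten the argument. Since $\delta_s=\delta_0=1$ with $s=d+1-r$, choose $\mathbf{c}$ of height $s$. Coordinatewise subadditivity of fractional parts gives $\textnormal{ht}(\mathbf{x})+\textnormal{ht}(\mathbf{c}-\mathbf{x})\geq \textnormal{ht}(\mathbf{c})=s$ for every $\mathbf{x}$. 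Summing over $\mathbf{x}\in\Lambda_\Delta$, the total is $2\sum_i i\,\delta_i$ because $\mathbf{x}\mapsto\mathbf{c}-\mathbf{x}$ is a bijection of $\Lambda_\Delta$, and symmetry gives $2\sum_i i\,\delta_i=s\sum_i\delta_i=s\,|\Lambda_\Delta|$; hence every inequality above is an equality. Two smaller corrections. First, do not build ``all coordinates of $\mathbf{c}$ are nonzero'' into the characterization of Gorensteinness: stated as an equivalence it fails for Gorenstein pyramids; here it is harmless because non-pyramidality is assumed, and in any case $c_i=(m_i-1)/m_i\neq 0$ falls out of the carry analysis for free. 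Second, your parenthetical identification of $\mathbf{c}$ with the class of the unique interior lattice point of $r\Delta$ has the wrong sign: that class is $-\mathbf{c}=\mathbf{h}$, of height $r$, while $\mathbf{c}$ is its negative, of height $d+1-r$.
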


\section{Existence}  
 In this section, we prove that for positive integers $k$ and $v$, there exists a lattice simplex with the $\delta$-polynomial $1+t^{k}+t^{2k}+\cdots+t^{(v-1)k}$. 
Moreover, we give a lower bound and an upper bound on the dimension of such a lattice simplex which is not a lattice pyramid. 
In fact, we obtain the following proposition.
\begin{Proposition}
	\label{inf}
		Let $v$ and $k$ be positive integers.
		Then there exists a Gorenstein simplex $\Delta \subset \RR^d$ of dimension $d$ whose $\delta$-polynomial is 
		$1+t^{k}+t^{2k}+\cdots+t^{(v-1)k}$.
Furthermore, if $\Delta$ is not a lattice pyramid over any lower-dimensional lattice simplex, 
then one has $vk-1 \leq d \leq 4(v-1)k-2$.
In particular, the lower bound holds if and only if  $\Lambda_{\Delta}$ is generated by $(1/v,\ldots,1/v)$.
	\end{Proposition}
\begin{proof}
	Let $\Delta_{v,k}$ be a lattice simplex such that $\Lambda_{\Delta_{v,k}}$ is generated by $(1/v,\ldots,1/v)$, where $d=vk-1$.
	Then it follows from Lemma \ref{delta}  that $\delta(\Delta_{v,k},t)=1+t^k+t^{2k}+\cdots+t^{(v-1)k}$.

Now,  let $\Delta \subset \RR^d$ be a lattice simplex of dimension $d$ whose 
$\delta$-polynomial is 
$1+t^{k}+t^{2k}+\cdots+t^{(v-1)k}$.
Let $\bold{x}=(x_0,\ldots,x_d) \in \Lambda_{\Delta}$ be an element such that $\textnormal{ht}(\xb)=(v-1)k$.
Then we have that $\textnormal{ht}(-\xb) \geq k$. 
Hence since $\textnormal{ht}(\xb)+ \textnormal{ht}(-\xb) \leq d+1$, we obtain $d \geq  vk-1$.
From {\cite[Theorem 7]{Nill}}, if $\Delta$ is not a lattice pyramid over any lower-dimensional lattice simplex,
then one has  $d \leq 4(v-1)k-2$.
Now, we assume that $d=vk-1$.
Since for each $i$, one has $0 \leq x_i \leq (v-1)/v$,
we obtain $\hei(\xb) \leq (d+1)(v-1)/v=(v-1)k$.
Hence for each $i$, it follows that $x_i=(v-1)/v$.
Therefore  $\Lambda_{\Delta}$ is generated by $(1/v,\ldots,1/v)$.
Then it is easy to show that
$\delta(\Delta,t)=1+t^k+t^{2k}+\cdots+t^{(v-1)k}$,
as desired.  
\end{proof}

\section{Classification}
In this section, we give a complete answer of Problem \ref{pro} for the case that $v$ is the product of two prime integers. 
First, we consider the case where $v$ is a prime integer.
The following proposition motivates us to consider Problem \ref{pro}. 

\begin{Proposition}[{\cite[Corollary 2.4]{Tsuchiya}}]
	\label{moti}
	Let $p$ be a prime integer and $\Delta \subset \RR^d $ a Gorenstein simplex of index $r$ whose normalized volume equals $p$.
	Suppose that $\Delta$ is not a lattice pyramid over any lower-dimensional lattice simplex.
	Then $d=rp-1$ and $\Lambda_{\Delta}$ is generated by $(1/p,\ldots,1/p)$.
	Furthermore, one has 
	$\delta(\Delta,t)=1+t^{r}+t^{2r}+\cdots+t^{(p-1)r}$.
\end{Proposition}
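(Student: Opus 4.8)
The plan is to push everything into the finite abelian group $\Lambda_\Delta$ and exploit that its order equals the normalized volume. Since $\delta(\Delta,1)=\mathrm{Vol}(\Delta)=p$ and, by Lemma \ref{delta}, $\delta(\Delta,1)=\sum_i\delta_i=\sharp\Lambda_\Delta$, the group $\Lambda_\Delta$ has prime order $p$, hence is cyclic; I fix a generator $\xb$ and write $j\xb=(c^{(j)}_0/p,\ldots,c^{(j)}_d/p)$ with $c^{(j)}_i\in\{0,1,\ldots,p-1\}$. First I would record that $\Delta$ is not a lattice pyramid exactly when no coordinate vanishes identically on $\Lambda_\Delta$ (the pyramid characterization \cite[Lemma 12]{Nill}); because $p$ is prime, the $i$-th coordinate of $j\xb$ is $jc^{(1)}_i/p \bmod 1$, which is nonzero for some $j$ precisely when $c^{(1)}_i\neq 0$. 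Hence every coordinate of the generator, and therefore of every nonzero element of $\Lambda_\Delta$, lies in $\{1/p,\ldots,(p-1)/p\}$. Two consequences will be used repeatedly: each nonzero $\lambda$ satisfies $(d+1)/p\le\mathrm{ht}(\lambda)\le(d+1)(p-1)/p$, and the involution $\lambda\mapsto-\lambda$ gives $\mathrm{ht}(\lambda)+\mathrm{ht}(-\lambda)=d+1$ for all $\lambda\neq\bold{0}$, since the $i$-th coordinates pair up as $c/p$ and $(p-c)/p$ with $c\neq 0$.

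The next step is to pin down the dimension by computing the average height over $\Lambda_\Delta$ in two ways. On one hand, summing the explicit coordinates and using that, for each fixed $i$, the residues $jc^{(1)}_i\bmod p$ run over all of $\{0,\ldots,p-1\}$ as $j$ does (again because $p$ is prime), one obtains $\sum_{\lambda}\mathrm{ht}(\lambda)=(d+1)(p-1)/2$, so the mean height is $(d+1)(p-1)/(2p)$. On the other hand, the Gorenstein hypothesis makes $\delta(\Delta,t)$ palindromic of degree $s$, so its mean exponent equals $s/2$; combined with the standard fact that the degree $s$ of the $\delta$-polynomial of a Gorenstein polytope of index $r$ equals $d+1-r$, equating the two expressions for the mean forces $(d+1)(p-1)/p=d+1-r$, that is, $d+1=rp$ and $s=r(p-1)$. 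This yields $d=rp-1$.

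Finally I would locate the generator $(1/p,\ldots,1/p)$. By the $-1$-symmetry the nonzero heights form a multiset symmetric about $(d+1)/2$, so the minimal nonzero height equals $(d+1)-s=r$; equivalently, the negative of the (unique, since $\delta_s=\delta_0=1$) element of maximal height $s$ has height $r$. Let $\yb$ be any element of height $r$. Its $d+1=rp$ coordinates each lie in $\{1/p,\ldots,(p-1)/p\}$ and sum to $r=rp\cdot(1/p)$, which is possible only if every coordinate equals $1/p$; thus $\yb=(1/p,\ldots,1/p)$, and being nonzero in a group of prime order it generates $\Lambda_\Delta$. Computing the heights of its multiples, $\mathrm{ht}\!\left(j\cdot(1/p,\ldots,1/p)\right)=rp\cdot j/p=rj$ for $0\le j\le p-1$, so Lemma \ref{delta} gives $\delta(\Delta,t)=1+t^{r}+t^{2r}+\cdots+t^{(p-1)r}$.

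The main obstacle will be the dimension count, because the group carries two a priori different symmetries: the intrinsic involution $\lambda\mapsto-\lambda$, whose center is $(d+1)/2$, and the Gorenstein palindromy, whose center is $s/2$. The whole argument hinges on reconciling them, and the average-height identity is exactly what couples the two centers and converts the index $r$ into the arithmetic constraint $d+1=rp$. Getting this bookkeeping right (and correctly invoking $s=d+1-r$) is the crux; once it is in place, the identification of the generator $(1/p,\ldots,1/p)$ and the final form of $\delta(\Delta,t)$ are forced.
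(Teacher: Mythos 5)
Your proof is correct. Note that the paper itself contains no proof of Proposition \ref{moti}: it is quoted verbatim from \cite[Corollary 2.4]{Tsuchiya}, so there is no internal argument to compare yours against; what you have written functions as a self-contained derivation from the paper's Section 1 toolkit (Lemma \ref{delta} and Nill's pyramid criterion). All three of your steps are sound: (i) $|\Lambda_\Delta|=\delta(\Delta,1)=\mathrm{Vol}(\Delta)=p$ forces a cyclic group of prime order, and the non-pyramid hypothesis then puts every coordinate of every nonzero element in $\{1/p,\dots,(p-1)/p\}$; (ii) summing heights over the group in two ways (coordinatewise, using that multiplication by $j$ permutes the residues mod $p$; and via palindromy, which gives mean exponent $s/2$) yields $(d+1)(p-1)=sp$; (iii) an element of minimal nonzero height $d+1-s=r$ has $d+1=rp$ coordinates, each at least $1/p$, summing to $r$, hence equals $(1/p,\dots,1/p)$, and the heights $rj$ of its multiples give the stated $\delta$-polynomial via Lemma \ref{delta}.

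The one point to flag is that step (ii) silently imports the fact that for a Gorenstein polytope of index $r$ the degree $s$ of the $\delta$-polynomial equals $d+1-r$ (codegree equals index). This is true and standard, but it is stated nowhere in the paper, and it does require an argument under the paper's definition of index: by Ehrhart reciprocity the smallest dilate of $\Delta$ with an interior lattice point is $d+1-s$, and if some $n\Delta$ with $n<r$ had an interior lattice point $x$, then the $d+1$ points $x+(r-n)v_i$, where $v_0,\dots,v_d$ are the vertices, would be distinct interior lattice points of $r\Delta$, contradicting the uniqueness of the interior lattice point of a reflexive polytope. You should either cite this fact explicitly or include that short justification; with it in place, the proof is complete.
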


This theorem says that for any positive integers $k$ and $v$, if $v$ is a prime integer, then there exists just one lattice simplex up to unimodular equivalence such that its $\delta$-polynomial equals $1+t^k+t^{2k}+\cdots+t^{(v-1)k}$.
By the following proposition, we know that if $v$ is not a prime integer, then there exist at least two such simplices up to unimodular equivalence.

\begin{Proposition}\label{prop1}
	Given positive integers $k$, $v$ and a proper divisor $u$ of $v$, let $\Delta \subset \RR^d$ be a lattice simplex of dimension $d$ such that  
	$\Lambda_{\Delta}$ is generated by 
	$$\left(\underbrace{u/v,\ldots,u/v}_{(v-1)k},\underbrace{1/v,\ldots,1/v}_{uk} \right) \in (\RR/\ZZ)^{(v+u-1)k}.$$
	Then one has
	$\delta(\Delta,t)=1+t^{k}+t^{2k}+\cdots+t^{(v-1)k}$.
\end{Proposition}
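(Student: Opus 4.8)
The plan is to apply Lemma \ref{delta}, which reduces the computation of $\delta(\Delta,t)$ to counting, for each value $i$, the number of group elements $\lambda \in \Lambda_\Delta$ with $\hei(\lambda)=i$. Since $\Lambda_\Delta$ is cyclic, generated by the displayed element, call it $\xb$, every element has the form $j\xb$ for some integer $j$, so the whole problem becomes a bookkeeping of the fractional parts of the coordinates of $j\xb$.

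First I would pin down the order of $\xb$, hence $|\Lambda_\Delta|$. Writing $v=uw$ with $w=v/u>1$ (possible since $u$ is a proper divisor), the first block of coordinates $u/v$ equals $1/w$ and has order $w$, while the trailing block $1/v$ has order $v$; since $w \mid v$ this gives $\ord(\xb)=\lcm(w,v)=v$, so $\Lambda_\Delta=\{\,j\xb : 0\le j\le v-1\,\}$ has exactly $v$ elements. This matches the expected normalized volume $v$.

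The core step is to compute $\hei(j\xb)$ for $0\le j\le v-1$. Reducing each coordinate into $[0,1)$ gives
\[
\hei(j\xb)=(v-1)(k+1)\,\frac{j \bmod w}{w}+u(k+1)\,\frac{j}{v}.
\]
I would then write $j=aw+b$ with $0\le a\le u-1$ and $0\le b\le w-1$ (the unique Euclidean decomposition, so $a=\lfloor j/w\rfloor$ and $b=j\bmod w$), substitute $v=uw$, and simplify; the factor $(v-1)$ and the extra $b$ coming from the second term combine so that
\[
\hei(j\xb)=(k+1)\left(\frac{vb}{w}+a\right)=(k+1)(ub+a).
\]
In particular every height is an integer, confirming that the prescribed subgroup does satisfy the integer-sum condition and hence corresponds to a genuine lattice simplex via the correspondence of \cite{BH}.

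Finally, I would observe that the map $j=aw+b\mapsto ub+a$ is a bijection of $\{0,1,\ldots,v-1\}$ onto itself, namely a swap of the two digits in the mixed-radix representation with bases $u$ and $w$; thus, as $j$ runs over $\{0,\ldots,v-1\}$, the quantity $ub+a$ runs over $\{0,1,\ldots,v-1\}$ exactly once. Hence each value $m(k+1)$ with $0\le m\le v-1$ is attained as $\hei(j\xb)$ for precisely one $j$, and no other height occurs. By Lemma \ref{delta} this yields $\delta_{m(k+1)}=1$ for $0\le m\le v-1$ and $\delta_i=0$ otherwise, which is exactly the claimed $\delta$-polynomial. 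The only place demanding care is the fractional-part simplification leading to the identity $\hei(j\xb)=(k+1)(ub+a)$ together with the recognition of the mixed-radix bijection; everything else is routine.
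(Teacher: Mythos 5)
Your proof is correct and follows essentially the same route as the paper: the paper sets $\yb=(v/u)\xb$ and parametrizes $\Lambda_\Delta$ as $\{i\xb+j\yb\}$ with $0\le i\le v/u-1$, $0\le j\le u-1$, which is exactly your decomposition $j=aw+b$ in disguise (since $i\xb+j\yb=(i+jw)\xb$), and both arguments conclude via Lemma \ref{delta} from the fact that the heights $(ui+j)(k+1)$ sweep out each multiple of $k+1$ from $0$ to $(v-1)(k+1)$ exactly once. Your explicit fractional-part computation just makes rigorous the additivity of heights that the paper asserts without comment.
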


\begin{proof}
Set $\bold{x}=\left(\underbrace{u/v,\ldots,u/v}_{(v-1)k},\underbrace{1/v,\ldots,1/v}_{uk} \right)$ and $\bold{y}=(v/u)\xb =\left(\underbrace{0,\ldots,0}_{(v-1)k},\underbrace{1/u,\ldots,1/u}_{uk} \right)$.
Then we obtain $\hei(\xb)=uk$ and $\hei(\yb)=k$.
Moreover, it follows that
$$\Lambda_\Delta=\{i\xb+j\yb \in (\RR/\ZZ)^{d+1} : i=0,\ldots,v/u-1, \ j=0,\ldots,u-1   \}.$$
For any integers $0 \leq i \leq v/u-1$ and $0 \leq j \leq u-1$,
one has 
$$\hei(i \xb + j \yb)=i\hei(\xb)+j\hei(\yb)=(iu+j)k.$$
Hence, it follows from Lemma \ref{delta} that $\delta(\Delta,t)=1+t^k+t^{2k}+\cdots+t^{(v-1)k}$, as desired.
\end{proof}

Furthermore, the following proposition can immediately be obtained from Lemma \ref{delta}.  
\begin{Proposition}
	\label{const}
	Given positive integers $v_1,v_2$ and $k$,
    let $\Delta_1 \subset \RR^{d_1}$ and $\Delta_2 \subset \RR^{d_2}$  be Gorenstein simplices of dimension $d_1$ and $d_2$ such that $\delta(\Delta_1,t)=1+t^{k}+t^{2k}+\cdots+t^{(v_1-1)k}$ and 
	$\delta(\Delta_2,t)=1+t^{v_1k}+t^{2v_1k}+\cdots+t^{v_1(v_2-1)k}$.
	Let $\Delta \subset \RR^{d_1+d_2+1}$ be a lattice simplex of dimension $d_1+d_2+1$ such that 
	$$\Lambda_{\Delta}=\{(\xb,\yb) \in (\RR/\ZZ)^{d_1+d_2+2} : \xb \in \Lambda_{\Delta_1},\yb \in \Lambda_{\Delta_2} \}.$$
	Then one has $\delta(\Delta,t)=1+t^{k}+t^{2k}+\cdots+t^{(v_1v_2-1)k}$.
	In particular, if neither $\Delta_1$ nor $\Delta_2$ is not a lattice pyramid, then $\Delta$ is not a lattice pyramid.
\end{Proposition}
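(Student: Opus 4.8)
The plan is to exploit the fact that $\Lambda_\Delta$ is, by its very construction, the direct product $\Lambda_{\Delta_1} \times \Lambda_{\Delta_2}$, together with the additivity of the height function across this product; the statement then follows from Lemma \ref{delta} and an elementary remark about base-$v_1$ digits. First I would note that every element of $\Lambda_\Delta$ is uniquely of the form $(\xb, \yb)$ with $\xb = (x_0, \ldots, x_{d_1}) \in \Lambda_{\Delta_1}$ and $\yb = (y_0, \ldots, y_{d_2}) \in \Lambda_{\Delta_2}$, so that $\hei((\xb, \yb)) = \sum_i x_i + \sum_j y_j = \hei(\xb) + \hei(\yb)$. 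Since Lemma \ref{delta} identifies $\delta(\Delta, t)$ with the height-generating function $\sum_{\lambda \in \Lambda_\Delta} t^{\hei(\lambda)}$, this additivity factorizes it at once:
$$\delta(\Delta, t) = \sum_{\xb \in \Lambda_{\Delta_1}} \sum_{\yb \in \Lambda_{\Delta_2}} t^{\hei(\xb)+\hei(\yb)} = \delta(\Delta_1, t)\,\delta(\Delta_2, t).$$

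Next I would evaluate this product directly from the hypotheses. Writing $\delta(\Delta_1, t) = \sum_{i=0}^{v_1-1} t^{i(k+1)}$ and $\delta(\Delta_2, t) = \sum_{j=0}^{v_2-1} t^{jv_1(k+1)}$, the product equals $\sum_{i,j} t^{(i+jv_1)(k+1)}$. As $(i,j)$ ranges over $\{0, \ldots, v_1-1\} \times \{0, \ldots, v_2-1\}$, the quantity $i + jv_1$ runs bijectively over $\{0, 1, \ldots, v_1v_2 - 1\}$ --- this is precisely the uniqueness of the base-$v_1$ expansion of integers below $v_1 v_2$. Hence each exponent $m(k+1)$ with $0 \le m \le v_1 v_2 - 1$ occurs with coefficient $1$, yielding $\delta(\Delta, t) = 1 + t^{k+1} + \cdots + t^{(v_1v_2-1)(k+1)}$ as claimed.

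For the final assertion I would invoke Nill's characterization quoted above: $\Delta$ is a lattice pyramid if and only if some coordinate $\ell \in \{0, \ldots, d_1+d_2+1\}$ is identically zero on $\Lambda_\Delta$. If such an $\ell$ satisfies $\ell \le d_1$, then restricting to the elements $(\xb, \bold{0})$ shows that the $\ell$-th coordinate vanishes on all of $\Lambda_{\Delta_1}$, so $\Delta_1$ is a lattice pyramid; if $\ell \ge d_1+1$, the symmetric argument with $\xb = \bold{0}$ forces $\Delta_2$ to be a lattice pyramid. Consequently, if neither $\Delta_1$ nor $\Delta_2$ is a lattice pyramid, then neither is $\Delta$. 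The whole argument is routine once Lemma \ref{delta} is available; the only step carrying any genuine content is the bijectivity of $(i,j) \mapsto i + jv_1$, where the factorization of $v_1 v_2$ into its two ``digits'' does the real work, and I anticipate no serious obstacle.
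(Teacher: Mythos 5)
Your proof is correct and follows the same route the paper intends: the paper gives no written proof, asserting that the proposition ``can immediately be obtained by Lemma \ref{delta},'' and your argument is precisely the fleshed-out version of that---factorizing the height-generating function over $\Lambda_{\Delta} = \Lambda_{\Delta_1} \times \Lambda_{\Delta_2}$, using the base-$v_1$ digit bijection $(i,j) \mapsto i + jv_1$, and handling the pyramid claim via Nill's lemma. No discrepancies to report.
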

\begin{Remark}
Let $\Delta, \Delta_1$ and $\Delta_2$ be the Gorenstein simplices in Proposition \ref{const}. Then $\Delta$ is the join of $\Delta_1$ and $\Delta_2$.
Hence one has 
\[
\delta(\Delta,t)=\delta(\Delta_1,t)\delta(\Delta_2,t).
\]
\end{Remark}

Now, we consider Problem \ref{pro} for the case that $v$ is $p^2$ or $pq$, where $p$ and $q$ are prime integers with $p \neq q$.  
We see examples of a Gorenstein simplex whose $\delta$-polynomial is $1+t^k+t^{2k}+\cdots+t^{(v-1)k}$ with the explicit vertex representation.

Given a sequence $A=(a_1,\ldots,a_d)$ of integers,
let $\Delta(A) \subset \RR^d$ be the convex hull of the origin of $\RR^d$ and all row vectors of the following matrix:
\begin{displaymath}
\begin{pmatrix}
1 & \ & \ & \ \\
\ & \ddots & \  & \ \\
\ & \ & 1 & \   \\
a_d-a_1 & \cdots & a_d-a_{d-1} & a_d
\end{pmatrix}
\in \ZZ^{d \times d},
\end{displaymath}
where the rest entries are all $0$.
Given sequences $B=(b_1,\ldots,b_s)$ and $C=(c_1,\ldots,c_d)$ of integers with $1\leq s <d$,
let  $\Delta(B,C)\subset \RR^d$ be the convex hull of the origin of $\RR^d$ and all row vectors of the following matrix: 
\begin{displaymath}
\begin{pmatrix}
1 & \ & \ & \ & \ & \ & \ & \ \\
\ & \ddots & \  & \ & \ & \ & \ & \ \\
\ & \ & 1 & \  & \ & \ & \ &  \  \\
b_s-b_1 & \cdots &b_s-b_{s-1} & b_s & \ & \ & \ &\  \\
\ & \ & \ & \ & 1 & \ & \ & \  \\
\ & \ & \ & \ & \ & \ddots & \ & \   \\
\ & \ & \ & \ & \ & \ & 1 & \   \\
c_d-c_1 & \cdots & \cdots & \cdots & \cdots & \cdots &c_d-c_{d-1} & c_d
\end{pmatrix}
\in \ZZ^{d \times d},
\end{displaymath}
where the rest entries are all $0$.

By using Lemma \ref{delta}, we can  compute their $\delta$-polynomials of the lattice simplices in the following propositions.
In fact, it is easy to determine all elements in their associated finite abelian groups.
\begin{Proposition}\label{maincor1}
	Let $p$ be a prime integer and $k$ a positive integer, and we set the following sequences of integers:
	\begin{enumerate}
		\item $A_1=(\underbrace{1,\ldots,1}_{p^2k-2},p^2)$; 
		\item $A_2=\left(\underbrace{1,\ldots,1}_{pk-1}, \underbrace{p ,\ldots,p}_{(p^2-1)k-1},p^2 \right)$;
		\item 
		$B=\left(\underbrace{1,\ldots ,1}_{pk-1},p\right)$, $C=\left(\underbrace{p,\ldots,p}_{pk},\underbrace{1,\ldots,1}_{p^2k-2},p\right)$.
	\end{enumerate} 
	Then the $\delta$-polynomial of each of  $\Delta(A_1)$, $\Delta(A_2)$ and $\Delta(B,C)$ equals $1+t^{k}+t^{2k}+\cdots+t^{(p^2-1)k}$. 
\end{Proposition}
\begin{Proposition}
	\label{maincor2}
	Let  $p$ and $q$ be prime integers with $p \neq q$ and $k$ a positive integer, and we set the following sequences of integers:
	\begin{enumerate}
		\item $A_1=(\underbrace{1,\ldots,1}_{pqk-2},pq)$; 
		\item 
		$B_1=\left(\underbrace{1,\ldots ,1}_{pk-1},p\right)$, $C_1=\left(\underbrace{q,\ldots,q}_{pk},\underbrace{1,\ldots,1}_{pqk-2},q\right)$;
		\item 
		$B_2=\left(\underbrace{1,\ldots ,1}_{qk-1},q\right)$, $C_2=\left(\underbrace{p,\ldots,p}_{qk},\underbrace{1,\ldots,1}_{pqk-2},p\right)$;
		
		\item $A_2=\left(\underbrace{1,\ldots,1}_{pk-1}, \underbrace{p ,\ldots,p}_{(pq-1)k-1},pq \right)$;
		\item $A_3=\left(\underbrace{1,\ldots,1}_{qk-1}, \underbrace{q ,\ldots,q}_{(pq-1)k-1},pq \right)$.	
	\end{enumerate} 
Then the $\delta$-polynomial of each of  
$\Delta(A_1)$, $\Delta(A_2)$, $\Delta(A_3)$, $\Delta(B_1,C_1)$ and $\Delta(B_2,C_2)$
equals $1+t^{k}+t^{2k}+\cdots+t^{(pq-1)k}$.
	
\end{Proposition}

The following theorems are the main results of the present paper.
 \begin{Theorem}\label{main1}
	Let $p$ be a prime integer and $k$ a positive integer, and let $\Delta \subset \RR^d $ be a Gorenstein simplex of dimension $d$ whose $\delta$-polynomial is $1+t^{k}+t^{2k}+\cdots+t^{(p^2-1)k}$. 
Suppose that $\Delta$ is not a lattice pyramid over any lower-dimensional lattice simplex.
Then one of the followings is satisfied:
\begin{enumerate}
	\item $d = p^2k-1$, or
	\item $d=p^2k+(p-1)k-1$, or
	\item $d = p^2k+pk-1$.
\end{enumerate}
Moreover, in each case, a system of generators of the finite abelian group $\Lambda_{\Delta}$ is the set of
row vectors of the matrix which  can be written up to permutation of the columns as follows:
\begin{enumerate}
	\item $(1/p^2 \  \cdots \ 1/p^2) \in (\RR/\ZZ)^{1 \times p^2k}$; 
	\item $\left(\underbrace{1/p \ \cdots \ 1/p}_{(p^2-1)k} \  \underbrace{1/p^2 \   \cdots \ 1/p^2}_{pk} \right) \in (\RR/\ZZ)^{1 \times (p^2+p-1)k}$;
	\item 
	$\begin{pmatrix}
	1/p \ \cdots \  1/p & \ 0 \ \cdots \ \ \  0 \\
	\underbrace{\ \ 0 \ \cdots \  \ \ 0}_{pk} & \underbrace{1/p \ \cdots \ 1/p}_{p^2k}
	\end{pmatrix}
	\in (\RR/\ZZ)^{2 \times p(p+1)k}$.
\end{enumerate} 
In particular, $\Delta$ is unimodularly equivalent to one of $\Delta(A_1)$, $\Delta(A_2)$ and $\Delta(B,C)$ as in Proposition \ref{maincor1}.
\end{Theorem}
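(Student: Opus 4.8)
The plan is to extract the volume and the symmetry from the prescribed $\delta$-polynomial, apply Lemma~\ref{p^2}, and then determine in each of its two cases precisely which data reproduce $1+t^{c}+\cdots+t^{(p^2-1)c}$, where $c:=k+1$. Since $\delta(\Delta,1)=p^2$ the normalized volume is $p^2$, and the polynomial is symmetric, so $\Delta$ is Gorenstein of some index $r$; as $\Delta$ is not a pyramid, Lemma~\ref{p^2} applies and separates the cyclic case (its part~(1)) from the case $\Lambda_\Delta\cong(\ZZ/p\ZZ)^2$ (its part~(2)). By Lemma~\ref{delta} the prescribed polynomial is equivalent to saying that $\hei$ is \emph{injective} on $\Lambda_\Delta$ with image $\{0,c,2c,\ldots,(p^2-1)c\}$; in particular $c$ is the least positive height and there is a unique $\yb\in\Lambda_\Delta$ with $\hei(\yb)=c$. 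I shall repeatedly use the identity $\hei(\xb)+\hei(-\xb)=\sharp\{\text{nonzero coordinates of }\xb\}$.

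In the cyclic case $\Lambda_\Delta=\langle\wb\rangle$ with $\wb=(\underbrace{1/p,\ldots,1/p}_{s},\underbrace{1/p^2,\ldots,1/p^2}_{m})$ and $s+m=d+1$. First I would note that integrality of heights forces $p\mid m$, say $m=pm'$, and that the relation $rp^2-1=(d-s)+ps$ rewrites as $s+m'=rp$. A direct fractional-part computation then gives $\hei(j\wb)=rb+m'a$ whenever $j=ap+b$ with $0\le a,b\le p-1$, so the multiset of heights is $\{rb+m'a:0\le a,b\le p-1\}$. Requiring this to equal $\{0,c,\ldots,(p^2-1)c\}$ forces $c\mid r$ and $c\mid m'$; writing $r=cr_1$, $m'=cm_1$ (with $r_1,m_1\ge1$) reduces matters to the numerical condition that $(a,b)\mapsto r_1b+m_1a$ be a bijection $\{0,\ldots,p-1\}^2\to\{0,\ldots,p^2-1\}$. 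A short interval-tiling argument shows this forces $\{r_1,m_1\}=\{1,p\}$, and via $s+m'=rp$ the two options unwind to $(s,m)=(0,\,p^2c)$ and $(s,m)=((p^2-1)c,\,pc)$, which are exactly cases~(1) and~(2).

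For $\Lambda_\Delta\cong(\ZZ/p\ZZ)^2$, the crux is that the sum of the two generators from Lemma~\ref{p^2} is $\xb:=(1/p,\ldots,1/p)\in\Lambda_\Delta$, whence $\hei(j\xb)=(d+1)j/p=rj$ because $d+1=rp$. Since each coordinate of each element is at most $(p-1)/p$, the maximal height equals $(d+1)(p-1)/p=r(p-1)$ and is attained at $(p-1)\xb$; equating this with $\deg\delta=(p^2-1)c$ yields $r=(p+1)c$ and hence $d=p(p+1)c-1$, the dimension of case~(3). It then remains to show that the unique height-$c$ element $\yb$ has every coordinate in $\{0,1/p\}$. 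Here I would restrict $\hei$ to the line $\langle\yb\rangle$: the values $\hei(j\yb)$ $(0\le j\le p-1)$ are distinct multiples of $c$, and the stated identity gives $\hei(j\yb)+\hei((p-j)\yb)=B$ for $1\le j\le p-1$ (each $j\yb$ having the same support as $\yb$), where $B:=\sharp\{\text{nonzero coordinates of }\yb\}$. Writing these $p-1$ positive heights as $\mu_jc$ with the $\mu_j$ distinct positive integers and $\mu_j+\mu_{p-j}=\beta:=B/c$, all $\mu_j$ lie in $\{1,\ldots,\beta-1\}$, forcing $\beta\ge p$; since the coordinates of $\yb$ are $y_l/p$ with $\sum_l y_l=pc$ we also get $B\le pc$, i.e.\ $\beta\le p$. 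Thus $\beta=p$, $B=pc$, and $pc$ nonzero integers $y_l$ summing to $pc$ must all equal $1$.

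Finally $\zb:=\xb-\yb$ has coordinate $1/p$ exactly on the $p^2c$ places where $\yb$ vanishes and $0$ on the $pc$ places where $\yb$ equals $1/p$; since $\hei(\yb)=c$ is not a multiple of $r=(p+1)c$, we have $\yb\notin\langle\xb\rangle$, so $\{\yb,\zb\}$ generates $\Lambda_\Delta$, and grouping columns by the support of $\yb$ produces exactly the matrix of case~(3). The routine ingredients are the fractional-part identity $\hei(j\wb)=rb+m'a$ and the interval-tiling step; the genuine obstacle is the last case, where one must prove the minimal-height element is \emph{clean}. The decisive move there is the squeeze $p\le\beta\le p$, obtained by combining the pairing $\mu_j+\mu_{p-j}=\beta$ (which confines the per-line heights to $\{1,\ldots,\beta-1\}$) with the crude estimate $B\le pc$; the rest is bookkeeping under the identification $\Lambda_\Delta\cong(\ZZ/p\ZZ)^2$.
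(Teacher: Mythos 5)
Your proposal is correct, and it reaches the classification by a genuinely different route from the paper's. Both proofs begin identically: $\delta(\Delta,1)=p^2$ and the symmetry of the $\delta$-polynomial make $\Delta$ Gorenstein of normalized volume $p^2$, so Lemma \ref{p^2} splits the analysis into the cyclic case and the case $\Lambda_\Delta\cong(\ZZ/p\ZZ)^2$. After that, the paper leans on Lemma \ref{useful} in both branches: in the cyclic branch it identifies the unique height-$(k+1)$ element as $(0,\ldots,0,1/p,\ldots,1/p)$ and reads off the block sizes, and in the rank-two branch it invokes Lemma \ref{useful} to force every $a_i\in\{0,p-1\}$, which makes the two generators have disjoint complementary supports. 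You never use Lemma \ref{useful}. In the cyclic branch you instead compute every height outright, $\hei(j\wb)=rb+m'a$ for $j=ap+b$, and turn the prescribed $\delta$-polynomial into the statement that $(a,b)\mapsto r_1b+m_1a$ is a bijection from $\{0,\ldots,p-1\}^2$ onto $\{0,\ldots,p^2-1\}$, which forces $\{r_1,m_1\}=\{1,p\}$; besides yielding cases (1) and (2), this condition is an equivalence, so sufficiency comes for free. In the rank-two branch your argument is the real departure: the element $\xb_1+\xb_2=(1/p,\ldots,1/p)$ is coordinatewise maximal at multiple $p-1$, which pins down $r=(p+1)(k+1)$ and the dimension of case (3) before anything is known about the minimal-height element; then the pairing $\hei(j\yb)+\hei((p-j)\yb)=\sharp\supp(\yb)$ (valid because $p$ prime keeps $\supp(j\yb)=\supp(\yb)$), the injectivity of $\hei$ coming from all $\delta_i\le 1$, and the crude bound $\sharp\supp(\yb)\le p(k+1)$ squeeze out $\sharp\supp(\yb)=p(k+1)$ and show every nonzero coordinate of $\yb$ equals $1/p$; finally $\{\yb,\ \xb_1+\xb_2-\yb\}$ generates $\Lambda_\Delta$ because $\hei(\yb)=k+1$ is not a multiple of $r$. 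This self-contained squeeze replaces the paper's terse appeal to Lemma \ref{useful} (the step $a_i\in\{0,p-1\}$, which the paper does not spell out), at the cost of extra length; the paper's route is shorter precisely because Lemma \ref{useful} concentrates the combinatorial work in one reusable lemma that also serves the proof of Theorem \ref{main2}.
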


\begin{Theorem}\label{main2}
Let $p$ and $q$ be prime integers with $p \neq q$ and $k$ a positive integer, and let $\Delta \subset \RR^d $ be a Gorenstein simplex of dimension $d$ whose $\delta$-polynomial is
$1+t^{k}+t^{2k}+\cdots+t^{(pq-1)k}$.
Suppose that $\Delta$ is not a lattice pyramid over any lower-dimensional lattice simplex.
Then one of the followings is satisfied:
\begin{enumerate}
	\item $d = pqk-1$, or
	\item $d=pqk+pk-1$, or
	\item $d=pqk+qk-1$, or
\item $d=pqk+(p-1)k-1$, or
\item $d=pqk+(q-1)k-1$.
\end{enumerate}
Moreover, in each case, the finite abelian group $\Lambda_{\Delta}$ is generated by one element which can be written up to permutation of the coordinates as follows:
\begin{enumerate}
	\item $(1/(pq), \ldots, 1/(pq)) \in (\RR/\ZZ)^{pqk}$; 
	\item $\left(\underbrace{1/p,\ldots,1/p}_{pk},\underbrace{1/q,\ldots,1/q}_{pqk} \right) \in (\RR/\ZZ)^{p(q+1)k}$;
	\item $\left(\underbrace{1/q,\ldots,1/q}_{qk},\underbrace{1/p,\ldots,1/p}_{pqk} \right) \in (\RR/\ZZ)^{(p+1)qk}$;
	\item $\left(\underbrace{1/q,\ldots,1/q}_{(pq-1)k},\underbrace{1/(pq),\ldots,1/(pq)}_{pk} \right) \in (\RR/\ZZ)^{(pq+p-1)k}$;
	\item $\left(\underbrace{1/p,\ldots,1/p}_{(pq-1)k},\underbrace{1/(pq),\ldots,1/(pq)}_{qk} \right) \in (\RR/\ZZ)^{(pq+q-1)k}$.
\end{enumerate} 
In particular, $\Delta$ is unimodularly equivalent to one of $\Delta(A_1)$, $\Delta(A_2)$, $\Delta(A_3)$, $\Delta(B_1,C_1)$ and $\Delta(B_2,C_2)$ as in Proposition \ref{maincor2}.
\end{Theorem}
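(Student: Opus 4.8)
The plan is to reduce the problem, via Lemma~\ref{pq}, to a purely arithmetic question about the heights of the elements of the cyclic group $\Lambda_\Delta$, and then to solve that question by a case analysis on the number $s_3$ of $1/(pq)$-coordinates of a generator.

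\textbf{Setup.} Since $\delta(\Delta,t)=\sum_{i=0}^{pq-1}t^{i(k+1)}$, evaluating at $t=1$ shows the normalized volume is $pq$, and the exponents are symmetric about $(pq-1)(k+1)/2$, so $\Delta$ is Gorenstein. As $\Delta$ is not a lattice pyramid, Lemma~\ref{pq} applies: there are integers $s_1,s_2,s_3\ge 0$ with $s_1+s_2+s_3=d+1$ and $rpq=s_1q+s_2p+s_3$ such that $\Lambda_\Delta=\langle\xb\rangle$ with $\xb=(\underbrace{1/p,\ldots,1/p}_{s_1},\underbrace{1/q,\ldots,1/q}_{s_2},\underbrace{1/(pq),\ldots,1/(pq)}_{s_3})$, where $r=\hei(\xb)$ is the index. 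The entire content of the theorem is to decide which triples $(s_1,s_2,s_3)$ actually produce the prescribed $\delta$-polynomial; the five listed cases correspond to $(0,0,pq(k+1))$, $(p(k+1),pq(k+1),0)$, $(pq(k+1),q(k+1),0)$, $(0,(pq-1)(k+1),p(k+1))$ and $((pq-1)(k+1),0,q(k+1))$.

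\textbf{Height formula and reduction.} Computing coordinatewise, for $0\le j<pq$ one gets the clean expression $\hei(j\xb)=jr-s_1\lfloor j/p\rfloor-s_2\lfloor j/q\rfloor$. By Lemma~\ref{delta}, the hypothesis on $\delta(\Delta,t)$ is equivalent to: $j\mapsto\hei(j\xb)$ is a bijection from $\{0,\ldots,pq-1\}$ onto $\{0,(k+1),\ldots,(pq-1)(k+1)\}$. Evaluating at $j=1,p,q$ (assume WLOG $p<q$, the opposite order being the $p\leftrightarrow q$ swap of the cases) forces $(k+1)$ to divide $r$, then $s_1$, then $s_2$, hence $s_3$. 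Writing $s_i=(k+1)\sigma_i$ and $r=(k+1)\rho$, the problem becomes: classify $(\sigma_1,\sigma_2,\sigma_3)$, with $\rho pq=\sigma_1q+\sigma_2p+\sigma_3$, for which $g(j):=j\rho-\sigma_1\lfloor j/p\rfloor-\sigma_2\lfloor j/q\rfloor$ is a bijection of $\{0,\ldots,pq-1\}$ onto itself. The five cases are recovered by multiplying back by $(k+1)$.

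\textbf{The case $\sigma_3=0$.} Here integrality of $\rho$ forces $p\mid\sigma_1$ and $q\mid\sigma_2$; writing $\sigma_1=p\tau_1$, $\sigma_2=q\tau_2$ and using the Chinese Remainder bijection $j\leftrightarrow(j\bmod p,\,j\bmod q)$ gives $g(j)=\tau_1(j\bmod p)+\tau_2(j\bmod q)$. Bijectivity then says the Minkowski sum $\tau_1\{0,\ldots,p-1\}+\tau_2\{0,\ldots,q-1\}$ equals $\{0,\ldots,pq-1\}$. Since the smallest positive value in this sumset is $\min(\tau_1,\tau_2)$ and the value $1$ must occur, we get $\min(\tau_1,\tau_2)=1$, and a short tiling argument then forces $(\tau_1,\tau_2)\in\{(1,p),(q,1)\}$, i.e.\ cases (2) and (3).

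\textbf{The case $\sigma_3>0$: the crux.} The main obstacle is to prove that $\sigma_3>0$ implies $\sigma_1\sigma_2=0$. Once this is established the analysis again linearizes: when $\sigma_1=0$, writing $j=aq+b$ yields $g(j)=a(\rho q-\sigma_2)+\rho b$, whose bijectivity is once more a Minkowski tiling, forced by the ``value $1$'' argument to give either $\rho=p,\ \sigma_2=pq-1,\ \sigma_3=p$ (case (4)) or the degenerate $\sigma_2=0$ (case (1)); the case $\sigma_2=0$ gives case (5) symmetrically. To exclude $\sigma_1,\sigma_2,\sigma_3$ all positive I would exhibit an explicit collision $g(j_1)=g(j_2)$ with $j_1\ne j_2$. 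Summing $g$ over $j$ gives the necessary identity $\sigma_1(q-1)+\sigma_2(p-1)=(pq-1)(\rho-1)$ (from $\sum_j g(j)=pq(pq-1)/2$), and combining it with the values of $g$ at the first few multiples of $p$ and of $q$ produces such a coincidence. The delicate point—the part that will need the most care—is that the location of the collision depends on how the multiples of $p$ and the multiples of $q$ interleave inside $[0,pq)$, so the argument must be organized around the order $p<q$ and around the sizes of $\rho,\sigma_1,\sigma_2$ relative to the gaps between consecutive ``jump'' indices.
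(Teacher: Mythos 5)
Your reduction is correct as far as it goes, and it is a genuinely more arithmetic route than the paper's: the height formula $\hei(j\xb)=jr-s_1\lfloor j/p\rfloor-s_2\lfloor j/q\rfloor$, the divisibility chain at $j=1,p,q$ forcing $(k+1)\mid r,s_1,s_2,s_3$, and the tiling analyses for $\sigma_3=0$ and for $\sigma_3>0$ with $\sigma_1\sigma_2=0$ all check out and recover exactly the five listed cases (the paper instead works directly with group elements, Lemma \ref{useful}, and subadditivity of $\hei$, never writing down the function $g$).

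However, there is a genuine gap exactly at what you yourself call the crux: excluding $\sigma_1,\sigma_2,\sigma_3$ all positive. What you offer there is a plan, not an argument. The sum identity $\sigma_1(q-1)+\sigma_2(p-1)=(pq-1)(\rho-1)$ is a correct necessary condition, but it is also satisfied by the five valid cases, so by itself it cannot produce a collision; you never specify which $j_1\neq j_2$ should collide, nor why the interleaving of multiples of $p$ and $q$ forces one. Since this exclusion is the bulk of the theorem (without it, further triples with all $s_i>0$ remain candidates), the proof is incomplete. For comparison, the paper closes this case as follows: by Lemma \ref{useful} the element $\ab$ of height $k+1$ cannot have order $pq$ (its nonzero coordinates would all be $1/(pq)$, while the first-block coordinates of any element are multiples of $1/p$), so say $\ord(\ab)=p$ and $\ab=(1/p,\ldots,1/p,0,\ldots,0,1/p,\ldots,1/p)$ supported on blocks $1$ and $3$; taking $\bb$ of height $p(k+1)$, strict subadditivity of $\hei$ plus the fact that all $pq$ elements $g\ab+h\bb$ must realize the heights $0,(k+1),\ldots,(pq-1)(k+1)$ exactly once forces $\bb$ to vanish on block $1$ and to equal $(0,\ldots,0,1/q,\ldots,1/q,1/q,\ldots,1/q)$; finally, since $(p-1)/p+(q-1)/q>1$, the third-block coordinates of $(p-1)\ab+(q-1)\bb$ wrap around, giving $\hei\bigl((p-1)\ab+(q-1)\bb\bigr)<(p-1)\hei(\ab)+(q-1)\hei(\bb)$ and hence a repeated height, a contradiction. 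You would need to carry out this argument, or a genuine arithmetic translation of it into statements about $g$, to complete your proof.
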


\begin{Remark}{\em
The lattice simplices as in Theorems \ref{main1} and \ref{main2} can be constructed by Propositions \ref{prop1} and \ref{const}.}	
\end{Remark}

In order to prove Theorems \ref{main1} and  \ref{main2},
we use the following lemma.
	\begin{Lemma}\label{useful}
		Let $v$ and $k$  positive integers,
		and let $\Delta \subset \RR^d$ be a Gorenstein simplex of dimension d whose $\delta$-polynomial equals 
		$1+t^k+t^{2k}+\cdots+t^{(v-1)k}$.
		Assume that $\xb \in (\RR/\ZZ)^{d+1}$ is an element of $\Lambda_{\Delta}$ such that $\textnormal{ht}(\xb)=k$
		and set $m=\textnormal{ord}(\xb)$.
		Then by reordering the coordinates, we obtain $\xb=\left(\underbrace{1/m,\ldots,1/m}_{s},\underbrace{0,\ldots,0}_{d-s+1} \right) $
		for some positive integer $s$.
	\end{Lemma}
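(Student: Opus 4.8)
The plan is to study the cyclic subgroup $\langle\xb\rangle=\{\bold{0},\xb,2\xb,\dots,(m-1)\xb\}$ of $\Lambda_\Delta$ and to track the heights of its elements. Choosing representatives of all coordinates in $[0,1)$, I set $f(j)=\hei(j\xb)$ for $0\le j\le m-1$, so that $f(0)=0$ and $f(1)=k+1$. Since each $j\xb$ lies in $\Lambda_\Delta$, Lemma \ref{delta} together with the prescribed form of the $\delta$-polynomial forces every height occurring in $\Lambda_\Delta$ to be a multiple of $k+1$ lying in $\{0,k+1,\dots,(v-1)(k+1)\}$; thus I may write $f(j)=c_j(k+1)$ with $c_j\in\{0,\dots,v-1\}$. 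Note also that $m\ge 2$, since $\hei(\xb)=k+1>0$ forces $\xb\neq\bold{0}$.

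The key observation is that $f$ is injective on $\{0,1,\dots,m-1\}$. Indeed, each coefficient $\delta_{i(k+1)}$ of the given $\delta$-polynomial equals $1$, so a given admissible height is attained by exactly one element of $\Lambda_\Delta$; hence $f(j)=f(j')$ would give $j\xb=j'\xb$, i.e.\ $m\mid(j-j')$, forcing $j=j'$. Consequently the integers $c_0,\dots,c_{m-1}$ are pairwise distinct. Next I record the elementary bound $f(j)=j(k+1)-\sum_i\lfloor jx_i\rfloor\le j(k+1)$, that is $c_j\le j$. A short induction then pins down the whole staircase: $c_0=0$, and if $c_0=0,\dots,c_{j-1}=j-1$, then $c_j\le j$ together with $c_j\notin\{0,\dots,j-1\}$ forces $c_j=j$. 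Hence $f(j)=j(k+1)$ for every $0\le j\le m-1$.

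Finally I extract the shape of $\xb$ from the top of this staircase, $f(m-1)=(m-1)(k+1)$. Since $\sum_i\lfloor(m-1)x_i\rfloor=(m-1)(k+1)-f(m-1)=0$ and every summand is nonnegative, each $\lfloor(m-1)x_i\rfloor=0$, i.e.\ $(m-1)x_i<1$ for all $i$. Writing $x_i=a_i/m$ with $0\le a_i<m$, this yields $a_i<m/(m-1)\le 2$, so $a_i\in\{0,1\}$ and every nonzero coordinate of $\xb$ equals $1/m$. Reordering the coordinates therefore gives $\xb=\left(\underbrace{1/m,\dots,1/m}_{s},0,\dots,0\right)$, where $s$ is the number of nonzero coordinates; comparing $\hei(-\xb)=s-(k+1)$ with $f(m-1)=(m-1)(k+1)$ incidentally shows $s=m(k+1)$.

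The only genuinely delicate point is the injectivity-and-distinctness step, which is exactly where the hypothesis that each $\delta$-coefficient is $0$ or $1$ is indispensable: it is what allows me to conclude that the heights of $\langle\xb\rangle$ fill out the full arithmetic progression $0,k+1,\dots,(m-1)(k+1)$ without repetitions or gaps. Once that is established, reading off the coordinates from $f(m-1)=(m-1)(k+1)$ is a direct computation.
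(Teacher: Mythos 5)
Your proof is correct and follows essentially the same route as the paper's: both arguments use the fact that all $\delta$-coefficients equal $1$ (so heights in $\Lambda_\Delta$ are distinct multiples of $k+1$) together with the bound $\hei(j\xb)\le j\,\hei(\xb)$ to force the staircase $\hei(j\xb)=j(k+1)$ for $0\le j\le m-1$, and then read off from $\hei((m-1)\xb)=(m-1)(k+1)$ that every nonzero coordinate must be $1/m$. The only difference is presentational: the paper argues by contradiction (assuming some coordinate $k_i/m$ has $k_i\ge 2$), while you argue directly via the identity $\hei(j\xb)=j(k+1)-\sum_i\lfloor jx_i\rfloor$, which amounts to the same computation.
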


\begin{proof}
Since $m=\textnormal{ord}(\xb)$, $\xb$ must be of a form $\left(k_1/m,\ldots,k_s/m,0,\ldots,0 \right) $ for a positive integer $s$ and integers $1 \leq k_1,\ldots,k_s \leq m-1$ by reordering the coordinates. 
If there exists an integer $k_i \geq 2$ for some $1 \leq i \leq s$, then one has $k_i(m-1)/m \geq 1$. 
Therefore, we obtain $\hei((m-1)\xb) < (m-1)\textnormal{ht}(\xb)=(m-1)k$. 
Since $m=\textnormal{ord}(\xb)$, $(m-1)\xb$ is different from $\bold{0}, \xb,\ldots, (m-2)\xb$. 
We remark that for any $\ab$, $\bb \in (\RR/\ZZ)^{d+1}$, one has $\hei(\ab+\bb) \leq \hei(\ab)+ \hei(\bb)$.
This fact and the supposed $\delta$-polynomial imply that  $\hei(t\xb)=t\hei(\xb)=tk$ for any $1 \leq t \leq m-1$.
This is a contradiction, as desired.
\end{proof}

Finally, we prove  Theorems \ref{main1} and \ref{main2}.

\begin{proof}[Proof of Theorem \ref{main1}]
By Lemma $\ref{p^2}$, $\Delta$ is unimodularly equivalent to either $\Delta_1$ or $\Delta_2$, where $\Delta_1$ and $\Delta_2$ are lattice simplices such that each system of generators of  $\Lambda_{\Delta_{1}}$ and $\Lambda_{\Delta_{2}}$ is the set of vectors of matrix as follows:
\begin{enumerate}
	\item[(i)] 
	$\left(\underbrace{1/p \ \cdots \ 1/p}_{d-s+1} \ \underbrace{1/p^2 \ \ldots \ 1/p^2}_{s} \right) \in (\RR/\ZZ)^{1 \times (d+1)}$;
	\item[(ii)] 
	$\begin{pmatrix}
	(a_0+1)/p & \cdots &  (a_{d-2}+1)/p & 0 & 1/p \\
     (p-a_0)/p  & \cdots & (p-a_{d-2})/p & 1/p & 0 
	\end{pmatrix}
	\in (\RR/\ZZ)^{2 \times (d+1)}$,
\end{enumerate}
where $s$ is a positive integer and  $0 \leq a_0,\ldots ,a_{d-2} \leq p-1$  are integers.

At first, we assume that  $\Delta$ is unimodularly equivalent to $\Delta_1$.
If $s=d+1$, then one has $(d+1)/p^2=k$, hence, $d=p^2k-1$. 
This is the case (1).
Now, we suppose that $s\neq d+1$.
Let $\bold{x}$ be an element of  $\Lambda_{\Delta_{1}}$ with ht$(\bold{x})=k$. 
Then by Lemma \ref{useful}, one has $\bold{x}=\left(\underbrace{0,\ldots,0}_{d-s+1},\underbrace{1/p,\ldots ,1/p}_{s} \right)$,
hence  $s=pk$. 
Set $\bold{y}=\left(\underbrace{1/p,\ldots,1/p}_{d-s+1}, \underbrace{1/p^2,\ldots,1/p^2}_{s} \right)$. Since for any $1 \leq m \leq p-1$, $\text{ht}(m\bold{x})=mk$, we have $\text{ht}(\bold{y})=pk$.
Hence it follows that $d-s+1=p^2k-k$, namely, $d=p^2k+(p-1)k-1$.
This is the case (2).

Next, we assume that $\Delta$ is unimodularly equivalent to $\Delta_2$.
By Lemma \ref{useful}, it follows that for any $0 \leq i \leq d-2$, $a_i \in \left\{ 0,p-1 \right\}$. 
Hence by reordering the coordinates of $\Lambda_{\Delta_{2}}$, we can assume that $\Lambda_{\Delta_{2}}$ is generated by  $$\xb_1=\left(\underbrace{1/p,\ldots,1/p}_{s}, \underbrace{0,\ldots,  0}_{d-s+1} \right), \xb_2=\left(\underbrace{0,\ldots,0}_{s}, \underbrace{1/p,\ldots,1/p}_{d-s+1} \right),$$
where $1 \leq s \leq \lfloor (d+1)/2 \rfloor$. 
Then since $\text{ht}(\xb_1)=k$, one has $s=pk$. 
Moreover, since $\hei(\xb_2)=pk$, we have $d-s+1=p^2k$, namely,
$d=p^2k+pk-1$. 
Therefore, this is the case (3). 

Conversely, in each case, it is easy to show that $\Delta$ is unimodularly equivalent to one of $\Delta(A_1)$, $\Delta(A_2)$ and $\Delta(B,C)$ as in Proposition \ref{maincor1}, as desired.  
\end{proof}

\begin{proof}[Proof of Theorem \ref{main2}]
By Lemma $\ref{pq}$, we can suppose that $\Lambda_{\Delta}$ is generated by $$\bold{x}=\left(\underbrace{1/p,\ldots, 1/p}_{s_1} , \underbrace{1/q,\ldots,1/q}_{s_2}, \underbrace{1/(pq),\ldots,1/(pq)}_{s_3} \right),$$
where $s_1+s_2+s_3=d+1$ with nonnegative integers $s_1,s_2,s_3$.
If $s_1=s_2=0$, since $\hei(\xb)=k$, one has $d = pqk-1$.
This is the case (1).
If $s_3=0$, we can assume that $\Lambda_{\Delta}$ is generated by
$$\xb_1=\left(\underbrace{1/p,\ldots,1/p}_{s_1} , \underbrace{0,\ldots,0}_{s_2} \right), \xb_2=\left(\underbrace{0,\ldots, 0}_{s_1} , \underbrace{1/q,\ldots,1/q}_{s_2} \right),$$
with $s_1,s_2>0$.
Then it follows that  $\hei(\xb_1)=k$ and $\hei(\xb_2)=pk$, or $\hei(\xb_1)=qk$ and $\hei(\xb_2)=k$.
Assume that $\hei(\xb_1)=k$ and $\hei(\xb_2)=pk$. 
Then one has $s_1=pk$ and $s_2=pqk$.
Hence since $d=pqk+pk-1$, this is the case (2). 
Similarly, we can show the case (3).

Next we suppose that $s_1,s_2,s_3>0$.
Let $\ab$ be an element of $\Lambda_{\Delta}$ such that $\hei(\ab)=k$.
By Lemma \ref{useful}, we know that $\ord(\ab)\neq pq$.
Hence, it follows that $\ord(\ab)$ equals $p$ or $q$. 
Now we assume that $\ord(\ab)=p$.
By Lemma \ref{useful} again,  $\bold{a}$ must be of a form $\left(\underbrace{1/p, \ldots,1/p}_{s_1} , \underbrace{0,\ldots,0}_{s_2}, \underbrace{1/p,\ldots, 1/p}_{s_3} \right)$.
Let $\bold{b}=(b_1,\ldots,b_{d+1})$ be an element of $\Lambda_{\Delta}$ such that $\hei(\bold{b})=pk$. 
If there exists an index $1 \leq i \leq s_1$ such that $b_{i}=n/p$ with an integer $1 \leq n \leq p-1$, then $\hei(\bold{b}+(p-1)\bold{a})< \hei (\bold{b})+(p-1) \hei (\bold{a})$. 
Since $\bold{b}+(p-1)\bold{a}$ is different from $\bold{0},\bold{a}, 2\bold{a},\ldots,(p-1)\bold{a},\bold{b},\bold{b}+\bold{a},\ldots,\bold{b}+(p-2)\bold{a}$, this contradicts to that $ \delta_{\Delta}(t)=1+t^k+t^{2k}+\cdots+t^{(pq-1)k}$.
Hence one obtains $b_{i}=0$ for any $1 \leq i \leq s_1$. 
Therefore, we can assume that $\bold{b}=\left(\underbrace{0,\ldots,0}_{s_1} , \underbrace{\ell/q ,\ldots,\ell/q}_{s_2}, \underbrace{m/q,\ldots,m/q}_{s_3} \right)$ for some positive integers $\ell,m$. 
Then whenever $(g_1,h_1) \neq (g_2,h_2)$ with $0 \leq g_1,g_2 \leq p-1$ and $0 \leq h_1,h_2 \leq q-1$, $g_1\bold{a}+h_1\bold{b}$ and $g_2\bold{a}+h_2\bold{b}$ are different elements of $\Lambda_{\Delta}$.
Hence since $ \delta_{\Delta}(t)=1+t^{k}+t^{2k}+\cdots+t^{(pq-1)k}$, 
one has
$$\hei(g\bold{a}+h\bold{b})=g \hei (\bold{a})+h \hei (\bold{b})$$
for any $0 \leq g \leq p-1$ and $0 \leq h \leq q-1$.
This implies that $\ell=m=1$.
However since $(p-1)/p + (q-1)/q>1$, we have  $\hei((p-1)\bold{a}+(q-1)\bold{b})<(p-1)\textnormal{ht}(\bold{a})+(q-1)\textnormal{ht}(\bold{b})$, a contradiction.
Therefore, it does not follow  $s_1,s_2,s_3>0$.

Finally, we assume that $s_1=0$ and $s_2>0$. 
Then one has $\hei(q\xb)=k$, hence, $s_3=pk$.
Moreover, since $\hei(\xb)=pk$, we obtain $s_2=(pq-1)k$.
Therefore, this is the case (4).
Similarly, we can show the case (5).

Conversely, it is easy to see that $\Delta$ is unimodularly equivalent to one of $\Delta(A_1)$, $\Delta(A_2)$, $\Delta(A_3)$, $\Delta(B_1,C_1)$ and $\Delta(B_2,C_2)$ as in Proposition \ref{maincor2}, as desired.
\end{proof}

\section{The number of Gorenstein simplices}
In \cite[Section $4$]{HT}, we asked how many reflexive polytopes which have the same $\delta$-polynomial exist.
Analogy to this question, in this section, we consider how many Gorenstein simplices which have a given $\delta$-polynomial of Problem \ref{pro} exist.

Given positive integers $v$ and $k$, let $N(v,k)$ denote the number of Gorenstein simplices,
up to unimodular equivalence,
which are not lattice pyramids over any lower-dimensional lattice simplex and whose  $\delta$-polynomials equal $1+t^{k}+t^{2k}+\cdots+t^{(v-1)k}$.
For example,
from Proposition $\ref{moti}$, $N(p,k)=1$ for any prime integer $p$.
Moreover, from Theorems \ref{main1} and \ref{main2},
$N(p^2,k)=3$ and $N(pq,k)=5$ for any distinct prime integers $p$ and $q$.
However, in other case, it is hard to determine $N(v,k)$.
Therefore, our aim of this section is to construct more examples of Gorenstein simplices of Problem \ref{pro}
and to give a lower bound on $N(v,k)$.
 
 The following theorem gives us more examples of Gorenstein simplices of Problem \ref{pro}. 
\begin{Theorem}
	\label{power1}
	Given a positive integer $v$, 
	let $\Delta \subset \RR^d$ be a lattice simplex of dimension $d$ such that
	$\Lambda_{\Delta}$ is generated by
	$$\left(\underbrace{1/v_1,\ldots,1/v_1}_{s_1},\underbrace{1/v_2,\ldots,1/v_2}_{s_2}, \ldots, \underbrace{1/v_t,\ldots,1/v_t}_{s_t} \right) \in (\RR/\ZZ)^{d+1},$$
	where $1 < v_1 < \cdots < v_t = v$ and for any $1 \leq i \leq t-1$, $v_i \mid v_{i+1}$ and  $s_{1},\ldots,s_{t}$ are positive integers.
	Then $\delta(\Delta,t)=1+t^{k}+t^{2k}+\cdots+t^{(v-1)k}$
	with a positive integer $k$
	if and only if
	\begin{displaymath}
	s_i=
	\begin{cases}
	\Bigl(\cfrac{v_t}{v_{i-1}}-\cfrac{v_t}{v_{i+1}}\Bigr)k, &\ 1 \leq i \leq t-1\\
	\cfrac{v_t}{v_{t-1}}k, &\ i=t,
	\end{cases}
	\end{displaymath}
	where $v_{0}=1$.
\end{Theorem}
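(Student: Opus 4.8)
The plan is to translate everything into the height statistic on the cyclic group $\Lambda_\Delta$. Since the generator $\xb=(1/v_1,\dots,1/v_t)$ (grouped into blocks of sizes $s_1,\dots,s_t$) has order $\lcm(v_1,\dots,v_t)=v_t=:v$, the group $\Lambda_\Delta=\langle\xb\rangle$ is cyclic of order $v$, and by Lemma~\ref{delta} the assertion that the $\delta$-polynomial equals $1+t^{k+1}+\cdots+t^{(v-1)(k+1)}$ is equivalent to saying that the multiset $\{\hei(j\xb):0\le j\le v-1\}$ equals $\{0,(k+1),2(k+1),\dots,(v-1)(k+1)\}$, each value once. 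Writing $w_\ell=v_\ell/v_{\ell-1}\ge 2$ (with $v_0=1$) and using the mixed-radix expansion $j=\sum_{\ell=1}^t c_\ell v_{\ell-1}$ with $0\le c_\ell\le w_\ell-1$, one has $j\bmod v_i=\sum_{\ell\le i}c_\ell v_{\ell-1}$, whence the key bookkeeping identity
\[
\hei(j\xb)=\sum_{i=1}^t s_i\{j/v_i\}=\sum_{\ell=1}^t c_\ell\,\beta_\ell,\qquad \beta_\ell:=v_{\ell-1}T_\ell,\quad T_\ell:=\sum_{i=\ell}^t \frac{s_i}{v_i}.
\]
Note that $T_1>T_2>\cdots>T_t>0$ holds automatically, since $T_\ell-T_{\ell+1}=s_\ell/v_\ell>0$.

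For the \emph{if} direction I would simply substitute the proposed values of $s_i$ and telescope: one checks directly that $T_\ell=(k+1)v/(v_{\ell-1}v_\ell)$ for every $\ell$, so that $\beta_\ell=(k+1)v/v_\ell$ and hence $\hei(j\xb)=(k+1)\sum_{\ell=1}^t c_\ell\,(v/v_\ell)$. Since $v/v_\ell=w_{\ell+1}\cdots w_t$, the map $j\mapsto\sum_\ell c_\ell (v/v_\ell)$ is itself a mixed-radix numeration, hence a bijection of $\{0,\dots,v-1\}$ onto itself; thus the heights are exactly $(k+1)\cdot\{0,1,\dots,v-1\}$ and Lemma~\ref{delta} gives the claim.

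For the \emph{only if} direction the plan is an induction on $t$, driven by a numeration lemma: if a digit sum $\{\sum_\ell c_\ell\beta_\ell\}$ (with digit bounds $w_\ell$) equals an arithmetic progression $\{0,g,\dots,(v-1)g\}$ with all $v=\prod_\ell w_\ell$ sums distinct, then the smallest positive value is $g=\min_\ell\beta_\ell$, it is attained at a unique index $\ell^\ast$, and by distinctness every other $\beta_\ell$ is $\ge g\,w_{\ell^\ast}$. I would first locate $\ell^\ast$: if $\ell^\ast<t$, then $T_{\ell^\ast}>T_{\ell^\ast+1}$ forces $\beta_{\ell^\ast+1}<g\,w_{\ell^\ast}$, contradicting the previous bound; hence $\ell^\ast=t$, giving $\beta_t=g=k+1$ and $T_t=(k+1)/v_{t-1}$. (Alternatively one may locate $\ell^\ast$ via Lemma~\ref{useful}: the unique height-$(k+1)$ element is a single $v_{\ell^\ast-1}\xb$, whose nonzero coordinates must all be equal, which is impossible unless its support is the single top block, i.e.\ $\ell^\ast=t$.) Restricting to the $v_{t-1}$ sums with vanishing top digit then yields a fresh instance of the same problem for the shorter tower $v_0\mid\cdots\mid v_{t-1}$, now an arithmetic progression of step $(v_t/v_{t-1})(k+1)$ and length $v_{t-1}$; by induction $\beta_\ell=(v_t/v_{t-1})(k+1)\,v_{t-1}/v_\ell=(k+1)v/v_\ell$ for $\ell\le t-1$ as well. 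Finally $s_t/v_t=T_t$ and $s_\ell/v_\ell=T_\ell-T_{\ell+1}=(k+1)v\bigl(1/(v_{\ell-1}v_\ell)-1/(v_\ell v_{\ell+1})\bigr)$ telescope to exactly the stated formulas.

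The step I expect to be the main obstacle is pinning down the correspondence between the tower levels $\ell$ and the mixed-radix weights $\beta_\ell$ — equivalently, proving that $\ell^\ast=t$ and that the reduction genuinely lands on the tower $v_0\mid\cdots\mid v_{t-1}$. Monotonicity of the $T_\ell$ by itself does not suffice to order an arbitrary family of weights, and it is precisely the interlocking of the weights $\beta_\ell=v_{\ell-1}T_\ell$ with the \emph{same} ratios $w_\ell$ that makes the argument close; getting this bookkeeping exactly right (in particular checking that fixing the top digit to $0$ reproduces the full progression of step $g\,w_t$ with no gaps) is the delicate point.
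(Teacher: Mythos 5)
Your proposal is correct and is essentially the paper's own argument in different packaging: your weights $\beta_\ell=v_{\ell-1}T_\ell$ are exactly the heights $\hei(\xb_{\ell-1})$ of the paper's elements $\xb_{\ell-1}=v_{\ell-1}\xb_0$, your mixed-radix height identity is the paper's additivity of heights over the representation $\sum_i c_i\xb_i$, and your key contradiction (if $\ell^{\ast}<t$ then $\beta_{\ell^{\ast}+1}<w_{\ell^{\ast}}\beta_{\ell^{\ast}}$ clashes with the lower bound forced by distinctness of heights) is precisely the inequality $\hei(\xb_{m+1})<(v_{m+1}/v_m)\hei(\xb_m)$ that drives the paper's downward induction. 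The only cosmetic differences are that you replace the paper's appeal to Lemma \ref{useful} (anchoring $\hei(\xb_{t-1})=k+1$) by an extremal minimum-weight argument and organize the peeling as a recursion on shorter towers, and the ``delicate point'' you flag does close in a few lines: the $c_t=0$ sums are each $0$ or at least $gw_t$, and their translates by $0,g,\dots,(w_t-1)g$ partition $\{0,g,\dots,(v-1)g\}$, which forces them to sit at exactly the multiples of $gw_t$.
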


\begin{proof}
Let  $$\xb_0=\left(\underbrace{1/v_1,\ldots,1/v_1}_{s_1},\underbrace{1/v_2,\ldots,1/v_2}_{s_2}, \ldots, \underbrace{1/v_t,\ldots,1/v_t}_{s_t} \right) \in (\RR/\ZZ)^{d+1},$$
and for $i=1,\ldots,t-1$, we set $\xb_i=v_i\xb_0$.
Then  it follows that
$$\Lambda_\Delta=\left\{\sum\limits_{i=0}^{t-1}c_i\xb_i : c_i \in \ZZ_{\geq 0}, 0 \leq c_i \leq v_{i+1}/v_i-1  \text{\ for \ } i=0,\ldots,t-1 \right\}.$$
Moreover, we obtain $\hei(\xb_{i})=\sum_{j=1}^{t-i} \dfrac{v_{i}}{v_{i+j}}s_{{i+j}}$ for  $i=0, \ldots, t-1$.
Since $$\hei(\xb_{i})=\hei\Bigl(\dfrac{v_i}{v_{i-1}} \xb_{i-1}\Bigr)=\dfrac{v_i}{v_{i-1}}\hei(\xb_{i-1})-s_i$$ for any $1 \leq i \leq t-1$,
it follows that
for any $1 \leq i \leq t-1$, $s_i=\Bigl(\dfrac{v_t}{v_{i-1}}-\dfrac{v_t}{v_{i+1}}\Bigr)k$ and $s_t=\dfrac{v_t}{v_{t-1}}k$
if and only if for any $0 \leq i \leq t-1$, $\hei(\xb_i)=\dfrac{v_t}{v_{i+1}}k$.
Hence we should prove that $\delta(\Delta,t)=1+t^{k}+t^{2k}+\cdots+t^{(v-1)k}$ if and only if 
for any $0 \leq i \leq t-1$, $\hei(\xb_i)=\dfrac{v_t}{v_{i+1}}k$.

At first, we assume that $\delta(\Delta,t)=1+t^{k}+t^{2k}+\cdots+t^{(v-1)k}$.
By Lemma \ref{useful},
one has $\hei(\xb_{t-1})=k$. 
Suppose that for any $n \leq i \leq t-1$, $\hei(\xb_{i})=\dfrac{v_{t}}{v_{i+1}}k$ with an integer $1 \leq n \leq t-1$.
Then since $\hei(\sum_{i=n}^{t-1}(v_{i+1}/v_i-1)\xb_i)=(v_t/v_n-1)k$, there exists an integer $m$ with $0 \leq m \leq n-1$ such that $\hei(\xb_{m})=\dfrac{v_t}{v_{n}}k$.
Now, we assume that $m < n-1$. 
Set
$$ \Lambda'=\left\{ 
c_{m}\bold{x}_{m}+\sum\limits_{i=n}^{t-1}c_i\xb_i : 0 \leq c_i \leq v_{i+1}/v_i-1   \text{\ for \ } i=m,n,n+1,\ldots,t-1\right\}.$$
Then one has $\{\hei(\bold{x}) : \xb \in \Lambda' \}=\{ jk : j=0,\ldots,(v_{m+1}v_{t})/(v_{m}v_{n})-1\}.$
However,
$$\hei(\xb_{m+1})=\hei\Bigl(\cfrac{v_{m+1}}{v_{m}}\xb_{m}\Bigr)<\cfrac{v_{m+1}}{v_{m}}\hei(\xb_{m})=\Bigl(\cfrac{v_{m+1}v_t}{v_{m}v_n}\Bigr)k.$$
and $\xb_{m+1}$ is not in $\Lambda'$, a contradiction. 
Hence we obtain $\hei(\xb_{i-1})=\dfrac{v_t}{v_{i}}k$ for any $0 \leq i \leq t-1$.

Conversely,  we assume that for any $0 \leq i \leq t-1$, $\hei(\xb_i)=\dfrac{v_t}{v_{i+1}}k$.
Since for any $c_i$ with $0 \leq c_i \leq v_{i+1}/v_i-1$, $\hei (\sum_{i=0}^{t-1}c_i\xb_i)=\sum_{i=0}^{t-1}c_i\hei(\xb_i)$,
one has $\delta(\Delta,t)=1+t^{k}+t^{2k}+\cdots+t^{(v-1)k}$,
as desired.
\end{proof}

By Theorems \ref{power1} and \cite[Theorem 2.2]{Tsuchiya}, we can answer  Problem \ref{pro}
when $v$ is a power of a prime integer and the associated finite abelian group is cyclic, namely,
it is generated by one element.
\begin{Corollary}
	\label{power2}
Let $p$ be a prime integer, $\ell$ and $k$ a positive integers,
and 
	let $\Delta \subset \RR^d$ be a lattice simplex of dimension $d$ such that $\Lambda_{\Delta}$ is cyclic and 
$\delta(\Delta,t)=1+t^{k}+t^{2k}+\cdots+t^{(p^{\ell}-1)k}$.
Suppose that $\Delta$ is not a lattice pyramid over any lower-dimensional lattice simplex.
Then there exist positive integers $0 < \ell_1 < \cdots < \ell_t = \ell$ and $s_{1},\ldots,s_{t}$
such that the following conditions are satisfied:
\begin{itemize}
	
	\item $\Lambda_{\Delta}$ is generated by
	$$\left(\underbrace{1/p^{\ell_1},\ldots,1/p^{\ell_1}}_{s_{1}},\underbrace{1/p^{\ell_2},\ldots,1/p^{\ell_2}}_{s_{2}}, \ldots, \underbrace{1/p^{\ell_t},\ldots,1/p^{\ell_t}}_{s_{t}} \right) \in (\RR/\ZZ)^{d+1}$$
		for some ordering of the vertices of $\Delta$;
		\item It follows that
	\begin{displaymath}
	s_i=
	\begin{cases}
	(p^{\ell-\ell_{i-1}}-p^{\ell-\ell_{i+1}})k, &\ 1 \leq i \leq t-1\\
	 p^{\ell-\ell_{t-1}}k, &\ i=t,
	\end{cases}
	\end{displaymath}
	where $\ell_{0}=0$.
\end{itemize}
\end{Corollary}

Now, we consider to give a lower bound on $N(v,k)$.
Given positive integers $v$ and  $k$, let $M(v,k)$ denote the number of Gorenstein simplices, up to unimodular equivalence,
which appeared in Theorem \ref{power1}.
Then one has $N(v,k) \geq M(v,k)$.
By Theorem \ref{power1}, we can determine $M(v,k)$ in terms of the divisor lattice of $v$.
Given a positive integer $v$, let $D_v$ the set of all divisors of $v$, ordered by divisibility.
Then $D_v$ is a partially ordered set, in particular, a lattice, called the \textit{divisor lattice} of $v$.
We call subset $C \subset D_v$ a {\em chain} of $D_v$ if $C$ is a totally ordered subset with respect to the induced order. 
\begin{Corollary}
\label{cor:chain}
	Let $v$ and $k$ be positive integers.
	Then $M(v,k)$ equals the number of chains from a non-least element to the greatest element in $D_v$. 
	In particular, one has $M(v,k)=\sum_{n \in D_v\setminus \{v\}}M(n,k)$.
\end{Corollary}
We give examples of $M(v,k)$.
\begin{Example}\label{ex}
	{\em
	(1) Let $v=p^\ell$ with a prime integer $p$ and a positive integer $\ell$.
	Then from Corollary \ref{cor:chain}, we know that $M(v,k)$ equals 
	the number of subsets of $\{1,\ldots,\ell-1\}$.
	Hence one has $M(v,k)=2^{\ell-1}$.
	
	(2) Let $v=p_1\cdots p_t$, where $p_1,\ldots,p_t$ are distinct prime integers. 
	From Corollary \ref{cor:chain}, we know that $M(v,k)$ depends only on $t$. 
	Now, let $a(t)=M(v,k)$, where we define $a(0)=M(1,k)=1$. 
	Then one has 
	$$ a(t)=M(v,k)=\sum\limits_{n \in D_v\setminus \{v\}}M(n,k)=1+\sum\limits_{i=1}^{t-1}\binom{t}{i} M(p_1\cdots p_i,k)=\sum\limits_{i=0}^{t-1}\binom{t}{i}a(i). $$
 We remark that $a(t)$ is the well-known recursive sequence (\cite[A000670]{Bell}) which is called  the {\em  ordered Bell numbers} or {\em Fubini numbers}.
}
\end{Example}

Corollary \ref{cor:chain} says that $M(v,k)$ depends only on the divisor lattice $D_v$.
In particular, letting $v=p_1^{a_1} \cdots p_{t}^{a_t}$ with distinct prime integers $p_1,\ldots,p_t$ and positive integers $a_1,\ldots,a_t$, $M(v,k)$ depends only on $(a_1,\ldots,a_t)$.
On the other hand, $N(v,k)$ depends only on the divisor lattice $D_v$ when $v$ is a prime integer or the product of two prime integers.
Therefore, we conjecture the following:
\begin{Conjecture}
	Let $v$ and $k$ be positive integers. Then $N(v,k)$ depends only on the divisor lattice $D_v$ of $v$.  
\end{Conjecture}

Let $\Pc$ be a lattice polytope whose $\delta$-polynomial is 
	$1+t^{k}+\cdots+t^{(v-1)k}$ with some positive integers $v$ and $k$.
	If $k > 1$, then $\Pc$ is a simplex. 
	However, if $k=1$, then $\Pc$ is not always a simplex.
	We see the list of the Gorenstein non-simplices whose $\delta$-polynomials are $1+t+\cdots+t^{v-1}$ when $2 \leq v \leq 4$.
	
	\begin{Proposition}[\cite{HTclass}]
		Let $v$ be a positive integer with $2 \leq v \leq 4$ and $\Pc$ a Gorenstein non-simplex whose $\delta$-polynomial is $1+t+\cdots+t^{v-1}$.
		Suppose that $\Pc$ is not a lattice pyramid over any lower-dimensional lattice polytope.
		\begin{enumerate}
		\item[(a)] When $v=2$, $\Pc$ is unimodularly equivalent to a unit square.
		\item[(b)] When $v=3$, $\Pc$ is unimodularly equivalent to the lattice polytope which is the convex hull of
		\begin{enumerate}
		\item[(1)] ${\bf 0},\eb_1,\eb_2,\eb_3,\eb_1+\eb_2-2\eb_3 \in \RR^3$, or
		\item[(2)] ${\bf 0},\eb_1,\eb_2,\eb_3,\eb_4,-\eb_1-\eb_2+\eb_3+\eb_4 \in \RR^4$.
		\end{enumerate}
		\item[(c)] When $v=4$, $\Pc$ is unimodularly equivalent to the lattice polytope which is the convex hull of
		\begin{enumerate}
				\item[(1)] ${\bf 0}, \eb_1,\eb_2,\eb_3, \eb_4,-\eb_1-\eb_2-\eb_3+\eb_4 \in \RR^4$, or
		\item[(2)] ${\bf 0}, \eb_1,\eb_2,\eb_3, \eb_4,-\eb_1-\eb_2-\eb_3+2\eb_4 \in \RR^4$, or
		\item[(3)] ${\bf 0}, \eb_1,\eb_2,\eb_3, \eb_4,\eb_5,-2\eb_1-\eb_2+\eb_3+\eb_4+\eb_5 \in \RR^5$, or
			\item[(4)] ${\bf 0}, \eb_1,\eb_2,\eb_3, \eb_4,\eb_5,\eb_6,-\eb_1-\eb_2-\eb_3+\eb_4+\eb_5+\eb_6 \in \RR^6$, or
			\item[(5)] ${\mathbf 0}, \eb_1,\eb_2,\eb_{3},\eb_2+\eb_3+2\eb_4,-\eb_{1}+\eb_{2} \in \RR^4$, or
			\item[(6)] ${\mathbf 0}, \eb_1,\eb_2,\eb_3,\eb_{4},\eb_3+\eb_4+2\eb_5,\eb_{1}+\eb_{2} \in \RR^5$, or
			\item[(7)]  ${\mathbf 0}, \eb_1,\eb_2,\eb_3,\eb_4,\eb_{5},\eb_4+\eb_5+2\eb_6,\eb_{1}+\eb_{2}-\eb_{3} \in \RR^6$.
		\end{enumerate}
				\end{enumerate}
				Here ${\bf 0}$ is the origin of $\RR^d$ and $\eb_1,\ldots,\eb_d$ are the canonical unit coordinate vectors of $\RR^d$.
	\end{Proposition}
Therefore, the number of Gorenstein polytopes up to unimodular equivalence, which are not lattice pyramids over any lower-dimensional lattice polytope and whose $\delta$-polynomials equal $1+t+\cdots+t^{v-1}$ with some positive integer $v$ does not depend only on the divisor lattice $D_v$ of $v$.

\end{document}